\theoremstyle{plain}
\newtheorem{thm}{Theorem}[section]
\newtheorem{defn}{Definition}[section]
\newtheorem{prop}[thm]{Proposition}
\newtheorem{lem}[thm]{Lemma}
\theoremstyle{remark}
\newtheorem{rem}{Remark}
\newtheorem{exmp}{Example}
\numberwithin{equation}{section}
\DeclareMathOperator{\hdim}{\dim_H}
\newcommand{\diag}{\mathrm{diag}}
\newcommand{\tba}{T_{\beta_1}}
\newcommand{\tbd}{T_{\beta_d}}
\newcommand{\be}{\bm\epsilon}
\newcommand{\bx}{\mathbf{x}}
\newcommand{\by}{\mathbf{y}}
\newcommand{\bz}{\mathbf{z}}
\newcommand{\qaq}{\mathrm{\quad and\quad}}
\newcommand{\N}{\mathbb N}
\newcommand{\R}{\mathbb R}
\renewcommand{\R}{\mathbb R}
\newcommand{\lm}{\mathcal{L}}
\newcommand{\ck}{\mathcal K}
\newcommand{\ca}{\mathcal A}
\newcommand{\cb}{\mathcal B}
\newcommand{\hc}{\mathcal H_\infty}
\begin{document}
\title{Shrinking parallelepiped targets for $\beta$-dynamical systems}

\author{Yubin He}

\address{Department of Mathematics, Shantou University, Shantou, Guangdong, 515063, China}

\email{ybhe@stu.edu.cn}

\subjclass[2020]{11K55, 28A80}

\keywords{$\beta$-expansion, Hausdorff dimension, parallelepiped, shrinking target problem}
\begin{abstract}
	For $ \beta>1 $ let $ T_\beta  $ be the $\beta$-transformation on $ [0,1) $. Let $ \beta_1,\dots,\beta_d>1 $ and let $ \mathcal P=\{P_n\}_{n\ge 1}  $ be a sequence of parallelepipeds in $ [0,1)^d $. Define
	\[W(\mathcal P)=\{\bx\in[0,1)^d:(\tba\times\cdots \times\tbd)^n(\bx)\in P_n\text{ infinitely often}\}.\]
	When each $ P_n $ is a hyperrectangle with sides parallel to the axes, the `rectangle to rectangle' mass transference principle by Wang and Wu [Math. Ann. 381 (2021)] is usually employed to derive the lower bound for $\hdim W(\mathcal P)$, where $\hdim$ denotes the Hausdorff dimension. However, in the case where $ P_n $ is still a hyperrectangle but with rotation, this principle, while still applicable, often fails to yield the desired lower bound. In this paper, we determine the optimal cover of parallelepipeds, thereby obtaining $\hdim W(\mathcal P)$. We also provide several examples to illustrate how the rotations of hyperrectangles affect $\hdim W(\mathcal P)$.
\end{abstract}
\maketitle

\section{Introduction}
The classical theory of Diophantine approximation is concerned with finding good approximations of irrationals. For any irrational $ x\in[0,1] $, if one can find infinitely many rationals $ p/q $ such that $ |x-p/q|<q^{-\tau} $ with $ \tau>2 $, then $ x $ is said to be $\tau$-well approximable. In \cite{HiVe97}, Hill and Velani introduced a dynamical analogue of the classical theory of $\tau$-well approximable numbers. The study of these sets is known as the so-called {\em shrinking target problem}. More precisely, consider a transformation $ T $ on a metric space $ (X,d) $. Let $ \{B_n\}_{n\ge 1} $ be a sequence of balls with radius $ r(B_n)\to 0 $ as $ n\to\infty $. The shrinking target problem concerns the size, especially the Hausdorff dimension, of the set
\[W(T,\{B_n\}_{n\ge 1}):=\{x\in X:T^nx\in B_n\text{ i.o.}\},\]
where `i.o.' stands for {\em infinitely often}. Since its initial introduction, $ W(T,\{B_n\}_{n\ge 1}) $ has been studied intensively in many dynamical systems. See, for example, \cite{AlBa21,BarRa18,BuWa2014,He23,HiVe97,HiVe99,HuWan2022,KLR22,LLVZ22,LWWX14,ShWa13,WaZh21} and reference therein.

 The set $ W(T,\{B_n\}_{n\ge 1}) $ can be thought of as trajectories which hit shrinking  targets $ \{B_n\}_{n\ge 1} $ infinitely often. Naturally, one would like to consider different targets, such as hyperrectangles, rather than just balls. To this end, motivated by the weighted theory of Diophantine approximation, the following set had also been introduced in $\beta$-dynamical system. For $ d\ge 1 $, let $ \beta_1,\dots,\beta_d>1 $ and let $ \mathcal P=\{P_n\}_{n\ge 1} $ be a sequence of parallelepipeds in $ [0,1)^d $. Define
\[W(\mathcal P)=\{\bx\in[0,1)^d:(T_{\beta_1}\times\cdots\times T_{\beta_d})^n(\bx)\in P_n\text{ i.o.}\},\]
where $ T_{\beta_i}\colon[0,1)\to [0,1) $ is given by
\[T_{\beta_i} x=\beta_i x-\lfloor\beta_i x\rfloor.\]
Here $ \lfloor\cdot\rfloor $ denotes the integer part of a real number.
Under the assumption that each $ P_n $ is a hyperrectangle with sides parallel to the axes, the Hausdorff dimension of $ W(\mathcal P) $, denoted by $ \hdim W(\mathcal P) $, was calculated by Li, Liao, Velani and Zorin \cite[Theorem 12]{LLVZ22}. It should be pointed out that their result crucially relies on this assumption. To see this, observe that $ W(\mathcal P) $ can be written as
\[\bigcap_{N=1}^\infty\bigcup_{n=N}^\infty (T_{\beta_1}\times\cdots\times T_{\beta_d})^{-n} P_n.\]
In the presence of such assumption, $ (T_{\beta_1}\times\cdots\times T_{\beta_d})^{-n} P_n $ will be the union of hyperrectangles whose sides are also parallel to the axes. Thus, the `rectangle to rectangle' mass transference principle by Wang and Wu \cite{WaWu21} can be employed to obtain the desired lower bound of $ \hdim W(\mathcal P) $. However, if this assumption is removed, then $ (T_{\beta_1}\times\cdots\times T_{\beta_d})^{-n} P_n $ is in general the union of parallelepipeds, and the mass transference principle, while still applicable, does not work well for this case. The main purpose of this paper is to determine $ \hdim W(\mathcal P) $ without assuming each $ P_n $ is a hyperrectangle. We further show that $ W(\mathcal P) $ has large intersection properties introduced by Falconer~\cite{Falconer1994}, which means that the set $ W(\mathcal P) $ belongs, for some $ 0\le s\le d $, to the class $ \mathscr G^s([0,1]^d) $ of $ G_\delta $-sets, with the property that any countable intersection of bi-Lipschitz images of sets in $ \mathscr G^s([0,1]^d) $ has Hausdorff dimension at least $ s $. In particular, the Hausdorff dimension of $ W(\mathcal P) $ is at least $ s $.

Let
\[f=\diag(\beta_1^{-1},\dots,\beta_d^{-1}).\]
In slightly less rigorous words, the set $(T_{\beta_1}\times\cdots\times T_{\beta_d})^{-n} P_n$ consists of parallelepipeds with the same shape as $f^nP_n$. Note that up to a translation, each $ P_n $ can be uniquely determined by $ d $ column vectors $ \alpha_j^{(n
	)}$. In Lemma \ref{l:paral}, we establish the existence of a rearrangement $f^n\alpha_{i_1}^{(n)},\dots,f^n\alpha_{i_d}^{(n)}$ of $f^n\alpha_1^{(n)},\dots,f^n\alpha_d^{(n)}$, which ensures that upon Garm-Schmidt process, the resulting pairwise orthogonal vectors, denoted by $\gamma_1^{(n)},\dots,\gamma_d^{(n)}$, satisfy the inequality:
\begin{equation}\label{eq:gamma}
	|\gamma_1^{(n)}|\ge\cdots\ge|\gamma_d^{(n)}|>0.
\end{equation}
Most importantly, this yields that up to a multiplicative constant, the optimal cover of $f^nP_n$ is the same as that of the hyperrectangle with sidelengths $|\gamma_1^{(n)}|\ge\cdots\ge|\gamma_d^{(n)}|>0$.
To describe the optimal cover of $(T_{\beta_1}\times\cdots\times T_{\beta_d})^{-n} P_n$, let
\[\ca_n=\{\beta_1^{-n},\dots,\beta_d^{-n},|\gamma_1^{(n)}|,\dots,|\gamma_d^{(n)}|\},\]
and define
\[s_n:=\min_{\tau\in\ca_n}\bigg\{\sum_{i\in\ck_{n,1}(\tau)}1+\sum_{i\notin\ck_{n,1}(\tau)}-\frac{n\log\beta_i}{\log\tau}+\sum_{i\in\ck_{n,2}(\tau)}\bigg(1-\frac{\log|\gamma_i^{(n)}|}{\log\tau}\bigg)\bigg\},\]
where the sets $\ck_{n,1}(\tau)$ and $\ck_{n,2}(\tau)$ are defined as
\begin{equation}\label{eq:ck}
	\ck_{n,1}(\tau):=\{1\le i\le d:\beta_i^{-n}\le \tau\}\qaq \ck_{n,2}(\tau)=\{1\le i\le d:|\gamma_i^{(n)}|\ge \tau\}.
\end{equation}

\begin{thm}\label{t:m}
  Let $\mathcal P=\{P_n\}_{n\ge 1}$ be a sequence of parallelepipeds. For any $n\in\N$, let $\gamma_1^{(n)},\dots,\gamma_d^{(n)}$ be the vectors described in \eqref{eq:gamma}. Then,
	\[\hdim W(\mathcal P)=\limsup_{n\to\infty}s_n=:s^*.\]
	Further, we have $ W(\mathcal P)\in\mathscr G^{s^*}([0,1]^d) $.
\end{thm}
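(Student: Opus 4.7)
My plan is to cover $W(\mathcal P)=\bigcap_{N\ge 1}\bigcup_{n\ge N}(T_{\beta_1}\times\cdots\times T_{\beta_d})^{-n}P_n$ at an optimal scale chosen separately at each level $n$. Concretely, I would pick $\tau_n^*\in\ca_n$ realising the minimum in the definition of $s_n$ and cover the preimage by balls of radius $\tau_n^*$. The preimage is the union of at most $\prod_{i=1}^d\beta_i^n$ translates of $f^nP_n$, and by Lemma \ref{l:paral} each such parallelepiped is covered by $\prod_{j\in\ck_{n,2}(\tau_n^*)}|\gamma_j^{(n)}|/\tau_n^*$ balls of radius $\tau_n^*$; in axis directions with $\beta_i^{-n}\le\tau_n^*$, consecutive $n$-level cylinders merge into a single ball, reducing the effective count from $\beta_i^n$ to $(\tau_n^*)^{-1}$. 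Multiplying these three contributions by $(\tau_n^*)^s$ yields an $s$-Hausdorff sum of order $(\tau_n^*)^{s-s_n}$. Since $\tau_n^*\le\sqrt{d}\,(\min_i\beta_i)^{-n}$ decays exponentially, for any $s>s^*$ the quantity $s-s_n$ is eventually positive and bounded below, giving a geometrically summable bound. Hence $\mathcal H^s(W(\mathcal P))=0$ for every $s>s^*$.

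\textbf{Lower bound and large intersection.} I plan to reduce to the rectangle-to-rectangle mass transference principle of Wang and Wu. By Lemma \ref{l:paral}, the optimal cover of each parallelepiped $f^nP_n$ at every scale $\tau$ coincides, up to multiplicative constants, with that of an orthogonal hyperrectangle with sidelengths $|\gamma_1^{(n)}|,\dots,|\gamma_d^{(n)}|$; moreover $f^nP_n$ lies in an axis-aligned bounding box of sides at most $\beta_i^{-n}$. Combining these two pieces of data, I would construct a surrogate limsup family of axis-aligned hyperrectangles whose Wang-Wu exponent at level $n$ matches $s_n$. Applying the Wang-Wu theorem to the surrogate yields $\hdim W(\mathcal P)\ge s^*$. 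Since the same theorem delivers membership in $\mathscr G^{s^*}([0,1]^d)$ for the surrogate set and $\mathscr G^{s^*}$ is stable under bi-Lipschitz maps, the large intersection property transfers back to $W(\mathcal P)$.

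\textbf{Main obstacle.} The principal difficulty is that the Wang-Wu mass transference principle is formulated for axis-aligned rectangles, whereas the preimages here are parallelepipeds with an $n$-dependent rotation. Naively inscribing axis-aligned boxes inside $f^nP_n$ is far too wasteful, since for highly tilted parallelepipeds the largest inscribed axis-aligned box is much smaller than the Gram-Schmidt dimensions predict. The crux of the argument will therefore be to use the orthogonal decomposition from Lemma \ref{l:paral} to build surrogate rectangles that match the full exponent $s_n$, rather than only a subset of its terms. A subsidiary point, which I would verify en route, is that the discrete minimum over $\ca_n$ in the definition of $s_n$ coincides with the continuous infimum: on each interval between consecutive elements of $\ca_n$ the sets $\ck_{n,1}(\tau)$ and $\ck_{n,2}(\tau)$ are constant and $s_n(\tau)$ depends on $\tau$ only through $1/\log\tau$, hence is monotone, so the optimum is attained at an element of $\ca_n$ and both the covering and the mass transference exponents align exactly.
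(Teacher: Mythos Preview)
Your upper bound sketch matches the paper's argument in \S4.1 essentially line for line, including the observation that the minimum over $\ca_n$ suffices because $s_n(\tau)$ is monotone on each interval between consecutive elements of $\ca_n$.

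The lower bound, however, has a genuine gap. You propose to reduce to the Wang--Wu rectangle-to-rectangle mass transference principle via ``surrogate'' axis-aligned rectangles, and you correctly identify the obstacle: the Gram--Schmidt rectangle $R_n$ from Lemma~\ref{l:paral} is \emph{not} axis-aligned, and its orientation changes with $n$. But you do not resolve this. For the lower bound one needs the surrogate limsup set to sit \emph{inside} $W(\mathcal P)$, so the surrogate rectangles must lie inside the actual parallelepipeds $f^nP_n+\bz^*$; yet any axis-aligned rectangle inscribed in a highly tilted parallelepiped has dimensions far smaller than $|\gamma_1^{(n)}|,\dots,|\gamma_d^{(n)}|$ (this is exactly the ``too wasteful'' phenomenon you mention). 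Conversely, if the surrogates are the rotated rectangles $R_n$, Wang--Wu does not apply, and no single bi-Lipschitz map can straighten all levels simultaneously since the rotation is $n$-dependent. Your transfer of the large intersection property ``back to $W(\mathcal P)$ via bi-Lipschitz stability'' fails for the same reason. Finally, the paper explicitly remarks that Wang--Wu requires a growth condition of the type $\limsup_n(-\log\psi_i(n))/n<\infty$, which you do not verify and which is not assumed in Theorem~\ref{t:m}.

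The paper bypasses Wang--Wu entirely. Instead it invokes a Hausdorff-content criterion for large intersection (Lemma~\ref{l:LIP}, from \cite{He24}): it suffices to show $\limsup_n\mathcal H_\infty^t(E_n\cap D)\gtrsim|D|^d$ for every hypercube $D$ and every $t<s^*$. This is done by putting a probability measure $\mu_n$ on $E_n\cap D$ --- a weighted sum of normalised Lebesgue measures on the parallelepipeds $f^nP_n+\bz^*$ over full cylinders inside $D$ --- and estimating $\mu_n(B(\bx,r))$ directly in four ranges of $r$ (very small, very large, and two intermediate regimes determined by the position of $r$ among the elements of $\ca_n$). The parallelepiped geometry enters only through the volume identity $\lm^d(f^nP_n)=\prod_i|\gamma_i^{(n)}|$ and the containment $f^nP_n\subset R_n$, so no axis-alignment is needed. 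This direct mass-distribution argument is the key idea you are missing.
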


\begin{rem}
	In fact, orthogonalizing the vectors $f^n\alpha_1^{(n)},\dots,f^n\alpha_d^{(n)}$ in different orders will result in different pairwise orthogonal vectors. But not all of them can be well used to illustrate the optimal cover of $f^nP_n$, only those satisfying \eqref{eq:gamma} do. For example, let $P$ be a parallelogram which is determined by two column vectors $\alpha_1=(1,0)^\top$ and $\alpha_2=(m,m)^\top$, $m>1$. Orthogonalizing in the order of $\alpha_1$ and $\alpha_2$ (resp. $\alpha_2$ and $\alpha_1$), we get the orthogonal vectors $\gamma_1=\alpha_1=(1,0)^\top$ and $\gamma_2=(0,m)^\top$ (resp. $\eta_1=\alpha_2=(m,m)^\top$ and $\eta_2=(1/2,-1/2)^\top$). Denote the rectangles determined by $\gamma_1$ and $\gamma_2$ (resp.  $\eta_1$ and $\eta_2$) as $R$ (resp. $\tilde R$). As one can easily see from the following picture, $P$ is contained in the rectangle obtained by scaling $\tilde R$ by a factor of $2$, whereas for $R$ a factor of $m$ is required. Note that $|\gamma_1|<|\gamma_2|$, while $|\eta_1|>|\eta_2|$. This simple example partially inspires us to choose a suitable order to orthogonalize $f^n\alpha_1^{(n)},\dots,f^n\alpha_d^{(n)}$ so that the resulting vectors satisfy \eqref{eq:gamma}, which turns out to be crucial (see Lemma \ref{l:paral} and Equations \eqref{eq:gammak} and \eqref{eq:ak}).
\end{rem}
\begin{figure}[H]
	\centering
	\includegraphics[height=7.5cm, width=15cm]{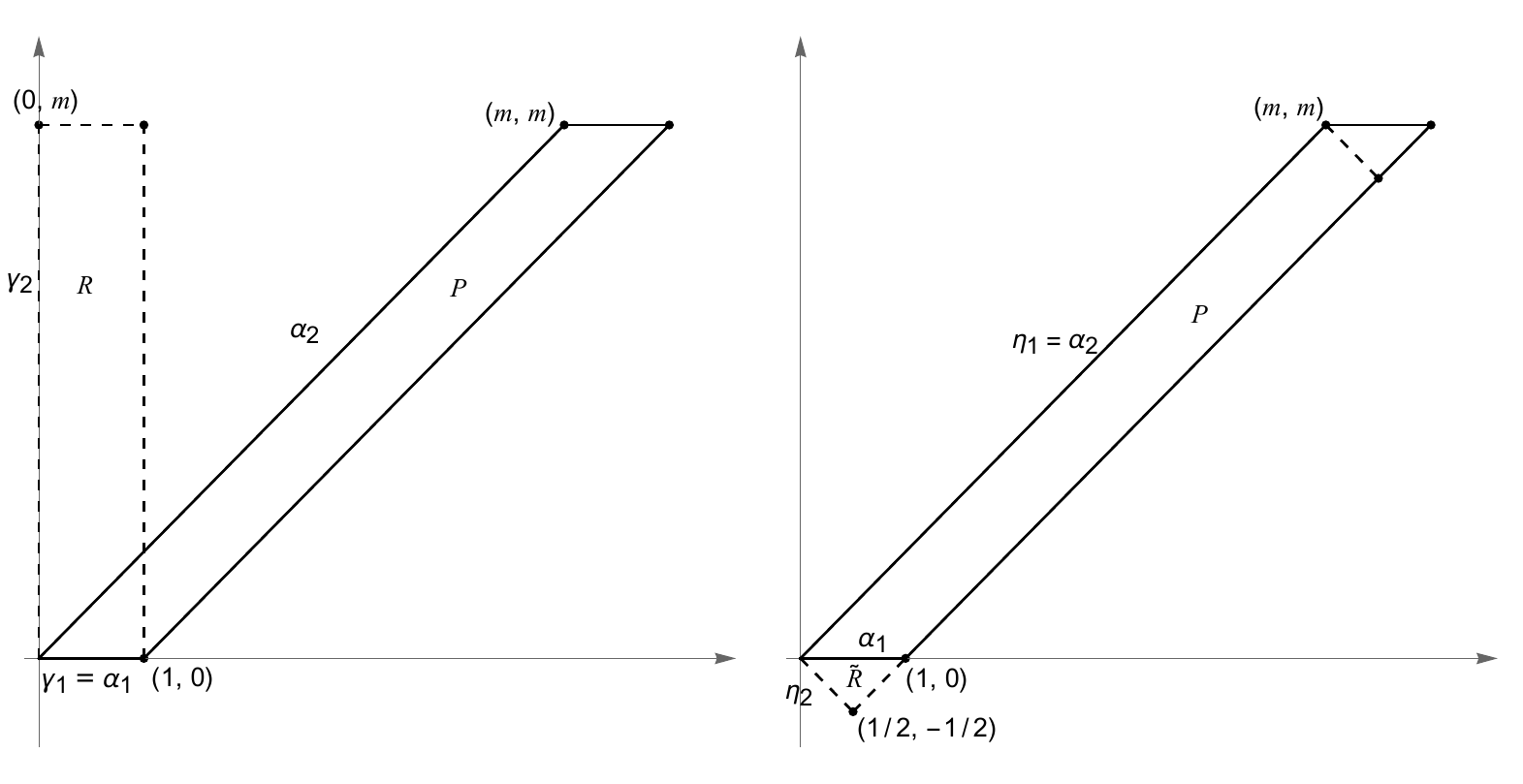}
\end{figure}

\begin{rem}
	Li, Liao, Velani and Zorin \cite[Theorem 12]{LLVZ22} studied an analogous problem, where $P_n$ is restricted to be the following form
	\[P_n=\prod_{i=1}^{d}[-\psi_i(n),\psi_i(n)],\]
	and where $\psi_i$ is a positive function defined on natural numbers, for $1\le i\le d$. They further posed an additional condition that $\limsup_{n\to\infty}-\log\psi_i(n)/n<\infty$ ($1\le i\le d$), as their proof of lower bound for $\hdim W(\mathcal P)$ relies on the `rectangle to rectangle' mass transference principle \cite[Theorem 3.3]{WaWu21}, which demands a similar condition. Their strategy is to investigate the accumulation points of the sequence $\{({-\log\psi_1(n)}/n,\dots,{-\log\psi_d(n)}/n)\}_{n\ge 1}$, and subsequently selecting a suitable accumulation point to construct a Cantor subset of $W(\mathcal P)$, thereby obtaining the lower bound for $\hdim W(\mathcal P)$. However, if $\limsup_{n\to\infty}-\log\psi_i(n)/n=\infty$ for some $i$, they illustrated by an example \cite[\S5.3]{LLVZ22} that this strategy may not achieve the desired lower bound. This problem has been addressed in their forthcoming manuscript, which is currently in progress. We stress that Theorem \ref{t:m} does not pose any similar condition on $P_n$, since we employ a different approach.
\end{rem}

	To gain insight into Theorem \ref{t:m}, we present two examples to illustrate how the rotations of rectangles affect the Hausdorff dimension of $ W(\mathcal P) $.
	\begin{exmp}\label{e:1}
		Let $ \beta_1=2 $ and $ \beta_2=4 $. Let $ \{H_n\}_{n\ge 1} $ be a sequence of rectangles with $ H_n=[0,2^{-n}]\times [0,4^{-n}] $. For a sequence $ \{\theta_n\}_{n\ge 1} $ with $ \theta_n\in [0,\pi/2] $, let
		\begin{equation}\label{eq:pn}
			P_n=R_{\theta_n}H_n+(1/2,1/2),
		\end{equation}
		where $ R_{\theta} $ denotes the counterclockwise rotation by an angle $ \theta $. The translation $(1/2,1/2)$ here is only used to ensure $P_n\subset [0,1)^d$. Suppose that $ \theta_n\equiv\theta $ for all $ n\ge 1 $. For any $ n\ge 1 $ we have
		\[|\gamma_1^{(n)}|=\sqrt{2^{-4n}\cos^2\theta+2^{-6n}\sin^2\theta} \quad\text{and}\quad |\gamma_2^{(n)}|=2^{-6n}/|\gamma_1^{(n)}|.\]
		By Theorem \ref{t:m}, we get
		\[\hdim W(\mathcal P)=\begin{cases}
			5/4&\text{if $ \theta\in[0,\pi/2) $},\\
			1&\text{if $ \theta=\pi/2 $}.
		\end{cases}\]
	\end{exmp}
\begin{exmp}
	Let $ P_n $ be as in \eqref{eq:pn} but with $ \theta_n=\arccos 2^{-an} $ for some $ a> 0 $. Then,
\[|\gamma_1^{(n)}|=\sqrt{2^{-n(4+2a)}+2^{-6n}(1-2^{-na})^2}\quad\text{and}\quad |\gamma_2^{(n)}|=2^{-6n}/|\gamma_1^{(n)}|.\]
By Theorem \ref{t:m}, we get
\[\hdim W(\mathcal P)=\begin{dcases}
	1+\frac{1-a}{4-a}&\text{if $ a\le 1 $},\\
	1&\text{if $ a>1 $}.
\end{dcases}\]
\end{exmp}

	The structure of the paper is as follows. In Section~\ref{s:preliminaries}, we recall several notions and elementary properties of $\beta$-transformation. In Section 3, we estimate the optimal cover of parallelepipeds in terms of Falconer's singular value function. In Section 4, we prove Theorems \ref{t:m}.

\section{$\beta$-transformation}\label{s:preliminaries}
We start with a brief discussion that sums up various fundamental properties of $ \beta $-transformation.

For $ \beta>1 $, let $ T_\beta $ be the $\beta$-transformation on $ [0,1) $.
For any $ n\ge 1 $ and $ x\in[0,1) $, define $ \epsilon_n(x,\beta)=\lfloor \beta T_\beta^{n-1}x\rfloor $. Then, we can write
\[x=\frac{\epsilon_1(x,\beta)}{\beta}+\frac{\epsilon_2(x,\beta)}{\beta^2}+\cdots+\frac{\epsilon_n(x,\beta)}{\beta^n}+\cdots,\]
and we call the sequence
\[\epsilon(x,\beta):=(\epsilon_1(x,\beta),\epsilon_2(x,\beta),\dots)\]
the $\beta$-expansion of $ x $. From the definition of $ T_\beta $, it is clear that, for $ n\ge 1 $, $ \epsilon_n(x,\beta) $ belongs to the alphabet $ \{0,1,\dots,\lceil\beta-1\rceil\} $, where $ \lceil x\rceil $ denotes the smallest integer greater than or equal to $ x $. When $ \beta $ is not an integer, then not all sequences of $ \{0,1,\dots,\lceil\beta-1\rceil\}^\N $ are the $ \beta $-expansion of some $ x\in[0,1) $. This leads to the notion of $\beta$-admissible sequence.

\begin{defn}
	A finite or an infinite sequence $ (\epsilon_1,\epsilon_2,\dots)\in\{0,1,\dots,\lceil\beta-1\rceil\}^\N $ is said to be $\beta$-admissible if there exists an $ x\in[0,1) $ such that the $\beta$-expansion of $ x $ begins with $ (\epsilon_1,\epsilon_2,\dots) $.
\end{defn}

Denote by $ \Sigma_\beta^n $ the collection of all admissible sequences of length $ n $. The following result of R\'enyi~\cite{Renyi1957} implies that the cardinality of $ \Sigma_\beta^n $ is comparable to $ \beta^n $.
\begin{lem}[{\cite[formula 4.9]{Renyi1957}}]\label{l:renyi}
	Let $ \beta>1 $. For any $ n\ge 1 $,
	\[\beta^n\le \# \Sigma_\beta^n\le \frac{\beta^{n+1}}{\beta-1},\]
	where $ \# $ denotes the cardinality of a finite set.
\end{lem}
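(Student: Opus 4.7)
My plan is to prove the two inequalities by separate short arguments, both hinging on the cylinder partition generated by $T_\beta$. Throughout, let
\[
I(\epsilon) \;:=\; \{x \in [0,1) : \epsilon_i(x,\beta) = \epsilon_i \text{ for } 1 \le i \le n\}
\]
denote the level-$n$ cylinder corresponding to $\epsilon = (\epsilon_1,\ldots,\epsilon_n) \in \Sigma_\beta^n$. These cylinders form a half-open partition of $[0,1)$, and since each branch of $T_\beta$ has slope $\beta$, each $I(\epsilon)$ is an interval of length at most $\beta^{-n}$.

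The lower bound $\#\Sigma_\beta^n \ge \beta^n$ then falls out immediately by summing the lengths in the partition: $1 = \sum_{\epsilon} |I(\epsilon)| \le \#\Sigma_\beta^n \cdot \beta^{-n}$.

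For the upper bound I would set up a telescoping recursion in $n$. First I would establish that for every $\epsilon \in \Sigma_\beta^n$ the image $T_\beta^n(I(\epsilon))$ is a half-open interval of the form $[0, c_\epsilon)$ with $c_\epsilon = \beta^n |I(\epsilon)| \in (0, 1]$; this follows by a short induction on $n$ using that each single-digit image $T_\beta(I(j))$ is itself left-anchored at $0$. Since $\epsilon_{n+1}(x,\beta) = \lfloor \beta T_\beta^n x \rfloor$, the admissible one-digit extensions of $\epsilon$ are exactly the nonnegative integers $j < c_\epsilon \beta$, of which there are at most $c_\epsilon \beta + 1$. Summing over $\epsilon \in \Sigma_\beta^n$ and using $\sum_\epsilon c_\epsilon = \beta^n \sum_\epsilon |I(\epsilon)| = \beta^n$ then gives
\[
\#\Sigma_\beta^{n+1} \;\le\; \beta \sum_\epsilon c_\epsilon + \#\Sigma_\beta^n \;=\; \beta^{n+1} + \#\Sigma_\beta^n,
\]
and iterating from $\#\Sigma_\beta^0 = 1$ telescopes to $\#\Sigma_\beta^n \le \sum_{k=0}^n \beta^k \le \beta^{n+1}/(\beta-1)$. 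The main technical point that I would need to verify, and therefore the only real obstacle, is the structural claim that $T_\beta^n I(\epsilon)$ is always left-anchored at $0$; once that is in hand, the extension count is immediate from the definition of $\epsilon_{n+1}(x,\beta)$, and the rest is clean bookkeeping.
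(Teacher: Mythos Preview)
The paper does not supply its own proof of this lemma; it is quoted as a classical fact from R\'enyi's 1957 paper with only a citation. Your argument is correct and self-contained: the lower bound via the cylinder partition and the upper bound via the recursion $\#\Sigma_\beta^{n+1}\le \beta^{n+1}+\#\Sigma_\beta^n$ both go through, and the structural claim you flag---that $T_\beta^n I(\epsilon)=[0,c_\epsilon)$---follows by the one-line induction you sketch, since $T_\beta$ maps each single-digit cylinder onto an interval of the form $[0,c)$. So there is nothing to compare against here; your proof simply fills in what the paper omits.
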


\begin{defn}
	For any $ \be_n:=(\epsilon_1,\dots,\epsilon_n)\in\Sigma_\beta^n $, we call
	\[I_{n,\beta}(\be_n):=\{x\in[0,1):\epsilon_j(x,\beta)=\epsilon_j,1\le j\le n\}\]
	an $ n $th level cylinder.
\end{defn}
From the definition, it follows that $ T_\beta^n|_{I_{n,\beta}(\be_n)} $ is linear with slope $ \beta^n $, and it maps the cylinder $ I_{n,\beta}(\be_n) $ into $ [0,1) $. If $ \beta $ is not an integer, then the dynamical system $ (T_\beta, [0,1)) $ is not a full shift, and so $ T_\beta^n|_{I_{n,\beta}(\be_n)} $ is not necessary onto. In other words, the length of $ I_{n,\beta}(\be_n) $ may strictly less than $ \beta^{-n} $, which makes describing the dynamical properties of $ T_\beta $ more challenging. To get around this barrier, we need the following notion.
\begin{defn}
	A cylinder $ I_{n,\beta}(\be_n) $ or a sequence $ \be_n\in\Sigma_\beta^n $ is called $\beta$-full if it has maximal length, that is, if
	\[|I_{n,\beta}(\be_n)|=\frac{1}{\beta^n},\]
	where $ |I| $ denotes the diameter of $ I $.
\end{defn}
When there is no risk of ambiguity, we will write full instead of $\beta$-full. The importance of full sequences is based on the fact that the concatenation of any two full sequences is still full.
\begin{prop}[{\cite[Lemma 3.2]{FaWa2012}}]\label{p:concatenation}
	An $ n $th level cylinder $ I_{n,\beta}(\be_n) $ is full if and only if, for any $\beta$-admissible sequence $ \be'_m\in\Sigma_\beta^m $ with $ m\ge 1 $, the concatenation $ \be_n\be_m' $ is still $ \beta $-admissible. Moreover,
	\[|I_{n+m,\beta}(\be_n\be_m')|=|I_{n,\beta}(\be_n)|\cdot|I_{m,\beta}(\be_m')|.\]
	So, for any two full cylinders $ I_{n,\beta}(\be_n), I_{m,\beta}(\be_m') $, the cylinder $ I_{n+m,\beta}(\be_n\be_m') $ is also full.
\end{prop}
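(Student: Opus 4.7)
The plan is to exploit the structural fact, already noted in the text, that $T_\beta^n$ restricted to $I_{n,\beta}(\be_n)$ is linear with slope $\beta^n$. Since the left endpoint of $I_{n,\beta}(\be_n)$---namely $\sum_{j=1}^n\epsilon_j\beta^{-j}$---is mapped to $0$ under $T_\beta^n$, this restriction is an increasing bijection onto $[0,\beta^n|I_{n,\beta}(\be_n)|)$. Consequently $I_{n,\beta}(\be_n)$ is full if and only if $T_\beta^n|_{I_{n,\beta}(\be_n)}$ is a bijection onto $[0,1)$. This single observation will drive both directions of the equivalence.

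For the `if' direction, assume $I_{n,\beta}(\be_n)$ is full and let $\be'_m\in\Sigma_\beta^m$. Pick any $y\in I_{m,\beta}(\be'_m)$ and, using surjectivity, let $x\in I_{n,\beta}(\be_n)$ be its unique preimage under $T_\beta^n$. Then $\epsilon_j(x,\beta)=\epsilon_j$ for $j\le n$ and $\epsilon_{n+j}(x,\beta)=\epsilon_j(y,\beta)=\epsilon'_j$ for $1\le j\le m$, so $\be_n\be'_m$ is admissible. The same bijectivity identifies $I_{n+m,\beta}(\be_n\be'_m)$ with $(T_\beta^n|_{I_{n,\beta}(\be_n)})^{-1}(I_{m,\beta}(\be'_m))$, and rescaling by the slope $\beta^n$ gives
\[|I_{n+m,\beta}(\be_n\be'_m)|=\beta^{-n}|I_{m,\beta}(\be'_m)|=|I_{n,\beta}(\be_n)|\cdot|I_{m,\beta}(\be'_m)|.\]
The final assertion---that the concatenation of two full cylinders is again full---then follows immediately by substituting $|I_{m,\beta}(\be'_m)|=\beta^{-m}$.

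For the `only if' direction I would argue by contraposition: suppose $I_{n,\beta}(\be_n)$ is not full, so its image under $T_\beta^n$ is $[0,a)$ with $a<1$. The goal is to produce an admissible $\be'_m$ with $I_{m,\beta}(\be'_m)\subset(a,1)$, since then $\be_n\be'_m$ cannot prefix the $\beta$-expansion of any point in $I_{n,\beta}(\be_n)$ and hence is inadmissible. To produce such $\be'_m$, observe that every $m$th level cylinder has length at most $\beta^{-m}$ (because $T_\beta^m$ restricts there to a map of slope $\beta^m$ with image in $[0,1)$), while the $m$th level cylinders partition $[0,1)$. Choosing $m$ with $\beta^{-m}<1-a$, any $m$th level cylinder that meets $(a,1)$ fits entirely inside it, and at least one such cylinder must meet $(a,1)$ by the partition property, giving the required $\be'_m$.

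The main technical point is the last paragraph: converting the length deficit $1-a>0$ into an admissible cylinder sitting in $(a,1)$. Some care is needed with half-open endpoints, since the image $[0,a)$ is itself half-open, but the uniform bound $\beta^{-m}$ on maximal cylinder length handles this once $m$ is chosen large. Everything else amounts to book-keeping about which digits of $x$ belong to $\be_n$ and which belong to $\be'_m$ in the concatenation.
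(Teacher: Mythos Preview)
The paper does not supply its own proof of Proposition~\ref{p:concatenation}; it is quoted from \cite[Lemma 3.2]{FaWa2012} and used as a black box. So there is nothing in the present paper to compare your argument against, and I can only assess correctness.

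Your `if' direction and the length formula are correct and cleanly argued via the bijection $T_\beta^n\colon I_{n,\beta}(\be_n)\to[0,1)$. In the `only if' direction there is a small slip: it is \emph{not} true that every $m$th level cylinder meeting $(a,1)$ lies entirely inside $(a,1)$, even when $\beta^{-m}<1-a$; the unique cylinder containing $a$ may straddle the point $a$. The repair is immediate: let $I_0=[c_0,d_0)$ be the $m$th level cylinder containing $a$. Then $d_0-a\le d_0-c_0\le\beta^{-m}<1-a$, so $d_0<1$, and hence the adjacent $m$th level cylinder $[d_0,d_1)$ exists and is contained in $[d_0,1)\subset(a,1)$. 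With this adjustment your proof goes through.
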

For an interval $ I\subset [0,1) $, let $ \Lambda_{\beta}^n(I) $ denote the set of full sequences $ \be_n $ of length $ n $ with $ I_{n,\beta}(\be_n)\subset I $. In particular, if $ I=[0,1) $, then we simply write $ \Lambda_{\beta}^n $ instead of $ \Lambda_{\beta}^n([0,1)) $. For this case, the cardinality of $\Lambda_{\beta}^n$ can be estimated as follows:
\begin{lem}[{ \cite[Lemma 1.1.46]{Li21}}]\label{l:Li}
	Let $\beta>1$ and $n\in \N$.
	\begin{enumerate}[(1)]
		\item If $\beta\in\N$, then
		\[\#\Lambda_{\beta}^n=\beta^n.\]
		\item If $\beta>2$, then
		\[\#\Lambda_{\beta}^n>\frac{\beta-2}{\beta-1}\beta^n.\]
		\item If $1<\beta<2$, then
		\[\#\Lambda_{\beta}^n>\bigg(\prod_{i=1}^{\infty}(1-\beta^{-i})\bigg)\beta^n.\]
	\end{enumerate}
\end{lem}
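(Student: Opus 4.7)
Part~(1) is immediate: when $\beta\in\N$, the $\beta$-transformation is the standard base-$\beta$ shift on $\{0,1,\dots,\beta-1\}$, every sequence in $\Sigma_\beta^n$ is admissible, and each cylinder has length exactly $\beta^{-n}$. Hence $\#\Lambda_\beta^n=\#\Sigma_\beta^n=\beta^n$.

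For parts~(2) and~(3) the plan is to derive a convolution-type identity for $F_n:=\#\Lambda_\beta^n$ and then extract the two bounds from it. By Proposition~\ref{p:concatenation} together with Parry's lexicographic characterization of admissibility, $\be_n=(\epsilon_1,\dots,\epsilon_n)$ is full iff no suffix of $\be_n$ coincides with a prefix of the $\beta$-expansion $(\epsilon_1^*,\epsilon_2^*,\dots)$ of $1$; and if $\be_n$ is non-full, the length of the longest matching suffix, call it $j$, satisfies $T_\beta^n(I_{n,\beta}(\be_n))=[0,T_\beta^{j}(1))$ and thus $|I_{n,\beta}(\be_n)|=T_\beta^{j}(1)\beta^{-n}$. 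Tracking how each cylinder refines at the next level (a full cylinder splits into $\lceil\beta\rceil$ pieces, exactly one with defect $1$; a defect-$j$ cylinder splits into $\epsilon^*_{j+1}+1$ pieces, exactly one with defect $j+1$) shows inductively that the total length contributed by defect-$j$ cylinders at level $n$ equals $F_{n-j}\,T_\beta^j(1)\beta^{-n}$, with the convention $F_0:=1$. Summing over $j$ and equating with $1$ yields
\[\beta^n=\sum_{j=0}^n F_{n-j}\,T_\beta^j(1),\qquad T_\beta^0(1):=1.\]
Part~(2) then follows at once: using $F_{n-j}\le\beta^{n-j}$ (since full cylinders have length $\beta^{-n}$ and the total length is~$1$) and the strict bound $T_\beta^j(1)<1$ for every $j\ge 1$, one has
\[F_n\ge\beta^n\Bigl(1-\sum_{j=1}^{\infty}T_\beta^j(1)\beta^{-j}\Bigr)>\beta^n\Bigl(1-\tfrac{1}{\beta-1}\Bigr)=\tfrac{\beta-2}{\beta-1}\beta^n.\]

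For part~(3) the above estimate is useless since $\tfrac{1}{\beta-1}>1$ when $\beta<2$, so a sharper inductive argument is required. The plan is to prove by induction on $n$ that $F_n\ge\beta^n P_n$ with $P_n:=\prod_{i=1}^n(1-\beta^{-i})$, and then invoke the monotonicity $P_n\downarrow\prod_{i=1}^\infty(1-\beta^{-i})>0$ to conclude. The inductive step rewrites the identity above so as to absorb exactly one factor $(1-\beta^{-n})$ per level, using the induction hypothesis on each $F_{n-j}$ for $j\ge 1$ and isolating the leading $j=1$ contribution (where $\epsilon_1^*=1$ in the $\beta<2$ regime) to generate this new factor cleanly. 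The main obstacle is precisely this bookkeeping: the crude bound $F_{n-j}\le\beta^{n-j}$ that sufficed for~(2) destroys the product structure, and one must carefully feed the inductive hypothesis back through the identity so that the extra $(1-\beta^{-n})$ factor is produced rather than lost. Once the inductive inequality closes, the strictly positive constant $\prod_{i=1}^\infty(1-\beta^{-i})$ is obtained, completing~(3).
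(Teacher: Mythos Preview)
The paper does not prove this lemma at all: it is simply quoted from \cite[Lemma~1.1.46]{Li21}. So there is no ``paper's own proof'' to compare against, and your proposal must stand on its own.

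Parts~(1) and~(2) are fine. The convolution identity
\[
\beta^n=\sum_{j=0}^{n}F_{n-j}\,T_\beta^{\,j}(1)
\]
is correct (it encodes the fact that every level-$n$ cylinder has a well-defined defect $j\in\{0,\dots,n\}$ determined by its last full prefix), and the derivation of~(2) from it via $F_{n-j}\le\beta^{\,n-j}$ and $T_\beta^{\,j}(1)<1$ is clean.

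Part~(3), however, is not a proof but a plan, and the plan has a real obstruction. In the identity above one has
\[
F_n=\beta^n-\sum_{j=1}^{n}F_{n-j}\,T_\beta^{\,j}(1),
\]
so to bound $F_n$ from \emph{below} you need \emph{upper} bounds on the $F_{n-j}$. Feeding in the inductive lower bound $F_{n-j}\ge\beta^{\,n-j}P_{n-j}$ therefore goes the wrong way, contrary to what you suggest. If instead you use $F_{n-j}\le\beta^{\,n-j}$ you obtain $F_n\ge\beta^n\bigl(1-\sum_{j=1}^{n}\beta^{-j}T_\beta^{\,j}(1)\bigr)$, and one would then need
\[
\sum_{j=1}^{n}\beta^{-j}T_\beta^{\,j}(1)\le 1-\prod_{i=1}^{n}(1-\beta^{-i}),
\]
but this already fails for small $n$ when $\beta$ is close to~$1$ (e.g.\ $n=2$, $\beta=3/2$). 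The alternative positive recursion $F_n=\sum_{k=1}^{n}\epsilon_k^{*}F_{n-k}$ (equivalent to your identity via $f(z)=1/(1-\sum_k\epsilon_k^{*}z^k)$) does allow lower bounds to be propagated, but the naive step $F_{n-k}\ge\beta^{\,n-k}P_{n-1}$ requires $\sum_{k\le n}\epsilon_k^{*}\beta^{-k}\ge 1-\beta^{-n}$, and since the tail $\sum_{k>n}\epsilon_k^{*}\beta^{-k}$ can be as large as $\beta^{-n}/(\beta-1)>\beta^{-n}$ for $\beta<2$, this too does not close. Your acknowledgement that ``the main obstacle is precisely this bookkeeping'' is accurate; but you have not shown how to overcome it, and the obvious routes do not work. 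As written, part~(3) remains unproven.
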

The general case $I\ne[0,1)$ requires the following technical lemma due to Bugeaud and Wang \cite{BuWa2014}.
\begin{lem}[{\cite[Proposition 4.2]{BuWa2014}}]\label{l:BW}
	Let $\delta>0$. Let $n_0\ge 3$ be an integer such that $(\beta n_0)^{1+\delta}<\beta^{n_0\delta}$. For any interval $I\subset [0,1)$ with $0<|I|<n_0\beta^{-n_0}$, there exists a full cylinder $I_{m,\beta}(\be_m)\subset I$ such that $|I|^{1+\delta}<|I_{m,\beta}(\be_m)|<|I|$.
\end{lem}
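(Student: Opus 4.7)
The plan is to exhibit a full $m$-cylinder inside $I$ by a two-step refinement: first locate some $m_0$-cylinder $C\subset I$ at a scale comfortably below $|I|$, then promote $C$ to a full sub-cylinder of slightly higher level $m_0+l$.

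For the first step, I would take $m_0$ to be the smallest integer with $\beta^{-m_0}\le|I|/3$. Since every level-$m_0$ cylinder has length at most $\beta^{-m_0}$, and at most two of those meeting $I$ touch the boundary $\partial I$, pigeonhole guarantees at least one level-$m_0$ cylinder $C=I_{m_0,\beta}(\be_{m_0})$ lying entirely inside $I$, and moreover at least one such $C$ of length comparable to $\beta^{-m_0}$ (else the total length of $m_0$-cylinders inside $I$ would fall short of the required $|I|/3$). For the second step, $T_\beta^{m_0}|_C$ is linear with slope $\beta^{m_0}$, mapping $C$ onto the sub-interval $T_\beta^{m_0}(C)\subset[0,1)$; when $|C|$ is comparable to $\beta^{-m_0}$ the image has length bounded away from $0$, so by Lemma~\ref{l:Li} one can locate a full cylinder of a bounded level $l=l(\beta)$ inside it. Pulling back under $T_\beta^{m_0}$ yields a full $(m_0+l)$-cylinder $C'\subset C\subset I$ of length $\beta^{-(m_0+l)}$.

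For the size bounds, $|C'|=\beta^{-(m_0+l)}<|I|$ is automatic from $\beta^{-m_0}\le|I|/3$, while $|C'|>|I|^{1+\delta}$ reduces via $\beta^{-m_0}\ge|I|/(3\beta)$ and $l=O(1)$ to the requirement $c(\beta)|I|>|I|^{1+\delta}$, i.e.\ $|I|^\delta<1/c(\beta)$. The arithmetic hypothesis $(\beta n_0)^{1+\delta}<\beta^{n_0\delta}$ paired with $|I|<n_0\beta^{-n_0}$ is exactly calibrated so that this inequality holds: it forces $|I|^\delta\le(n_0\beta^{-n_0})^\delta=n_0^\delta\beta^{-n_0\delta}$ to be smaller than any fixed $\beta$-dependent constant once the inequalities are unpacked on a logarithmic scale.

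The principal obstacle is producing the bounded additional level $l$ in Step 2: one must show that every sub-interval of $[0,1)$ of positive length $r$ contains a full cylinder of level $l(\beta,r)$, and that $l$ stays bounded once $r$ is bounded away from $0$. This relies on the combinatorial structure of $\beta$-admissibility and on the explicit form of the $\beta$-expansion of $1$, and its quantitative dependence on $\beta$ is the delicate point on which the whole argument ultimately rests.
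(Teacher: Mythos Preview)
The paper does not give its own proof of this lemma; it is simply quoted from \cite[Proposition~4.2]{BuWa2014}. So there is no in-paper argument to compare against, only the original Bugeaud--Wang proof.

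Your sketch has a genuine gap, already in Step~1. You claim that among the level-$m_0$ cylinders lying inside $I$ at least one has length comparable to $\beta^{-m_0}$, ``else the total length \ldots\ would fall short''. That pigeonhole would require an upper bound of order $O(1)$ on the \emph{number} $N$ of $m_0$-cylinders inside $I$ (since $|I|\asymp\beta^{-m_0}$). But for general $\beta$ non-full cylinders can be arbitrarily short relative to $\beta^{-m_0}$, and up to about $m_0$ of them can sit consecutively; hence $N$ may grow like $m_0$, and the longest cylinder you can guarantee inside $I$ has length only $\asymp\beta^{-m_0}/m_0$. Consequently $T_\beta^{m_0}(C)$ has length $\asymp 1/m_0$, not bounded away from $0$, and the ``bounded additional level $l$'' in Step~2 is not bounded uniformly in $m_0$. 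Lemma~\ref{l:Li} does not help here: it counts full cylinders in all of $[0,1)$, not their distribution inside a prescribed subinterval.

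What Bugeaud and Wang actually use is a sharp combinatorial fact (their Lemma~3.2/Proposition~4.1): among any $n+1$ consecutive level-$n$ cylinders, at least one is full. This says directly that every interval of length exceeding roughly $n\beta^{-n}$ contains a full $n$-cylinder. One then chooses $m$ so that $m\beta^{-m}<|I|$ (guaranteeing a full $m$-cylinder inside $I$) while $\beta^{-m}>|I|^{1+\delta}$; the hypothesis $(\beta n_0)^{1+\delta}<\beta^{n_0\delta}$ together with $|I|<n_0\beta^{-n_0}$ is exactly what makes these two constraints on $m$ compatible. Your final paragraph correctly senses that the crux is a distribution statement for full cylinders, but the missing lemma is this ``one-in-$(n+1)$'' fact, and it is needed already to repair Step~1, not only Step~2.
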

 Now, we are ready to tackle with the general case.
\begin{lem}\label{l:inside I}
	Let $ \delta>0 $. Let $ n_0\ge 3 $ be an integer such that $ (\beta n_0)^{1+\delta}<\beta^{n_0\delta} $. Then for any interval $ I $ with $ 0<|I|<n_0\beta^{-n_0} $, there exists a constant $ c_{\beta}>0 $ depending on $\beta$ such that for any $ n\ge -(1+\delta)\log_\beta |I|$,
	\[\#\Lambda_{\beta}^n(I)\ge c_\beta|I|^{1+\delta}\beta^n.\]
\end{lem}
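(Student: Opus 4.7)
The plan is to combine the preceding two lemmas in a direct manner: Lemma \ref{l:BW} produces a single full cylinder strictly inside $I$ with length comparable to $|I|^{1+\delta}$, and then Lemma \ref{l:Li} together with Proposition \ref{p:concatenation} lets us multiply this full cylinder by many full cylinders of the complementary length to produce lots of full cylinders of length $n$ inside $I$.

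First I would apply Lemma \ref{l:BW} to obtain a full cylinder $I_{m,\beta}(\be_m) \subset I$ with
\[|I|^{1+\delta} < |I_{m,\beta}(\be_m)| < |I|.\]
Because the cylinder is full, $|I_{m,\beta}(\be_m)| = \beta^{-m}$, so $\beta^{-m} > |I|^{1+\delta}$, equivalently $m < -(1+\delta)\log_\beta |I|$. Under the hypothesis $n \ge -(1+\delta)\log_\beta |I|$ this gives the key inequality $n > m$, hence $n - m \ge 1$.

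Next I would invoke Proposition \ref{p:concatenation}: for every $\be'_{n-m} \in \Lambda_\beta^{n-m}$, the concatenation $\be_m \be'_{n-m}$ is $\beta$-admissible and full, and the associated cylinder satisfies $I_{n,\beta}(\be_m\be'_{n-m}) \subset I_{m,\beta}(\be_m) \subset I$. Consequently
\[\#\Lambda_\beta^n(I) \;\ge\; \#\Lambda_\beta^{n-m}.\]
Since $n - m \ge 1$, Lemma \ref{l:Li} yields a positive constant $C_\beta$ depending only on $\beta$ (namely $1$, $(\beta-2)/(\beta-1)$, or $\prod_{i=1}^\infty(1-\beta^{-i})$ according to the three cases) such that $\#\Lambda_\beta^{n-m} \ge C_\beta \beta^{n-m}$.

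Putting everything together,
\[\#\Lambda_\beta^n(I) \;\ge\; C_\beta \beta^{n-m} \;=\; C_\beta \beta^n \cdot \beta^{-m} \;>\; C_\beta |I|^{1+\delta} \beta^n,\]
and taking $c_\beta := C_\beta$ finishes the argument. There is no real obstacle here: the only delicate point is verifying the strict inequality $n - m \ge 1$, which follows from chaining the estimate $\beta^{-m} > |I|^{1+\delta} = \beta^{(1+\delta)\log_\beta|I|}$ with the lower bound on $n$. The rest is a bookkeeping combination of the quoted results.
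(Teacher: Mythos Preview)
Your proposal is correct and follows essentially the same approach as the paper's proof: apply Lemma~\ref{l:BW} to obtain a full cylinder $I_{m,\beta}(\be_m)\subset I$ with $|I|^{1+\delta}<\beta^{-m}<|I|$, deduce $n\ge m$ from the hypothesis on $n$, then concatenate with the full words in $\Lambda_\beta^{n-m}$ via Proposition~\ref{p:concatenation} and estimate their number by Lemma~\ref{l:Li}. Your write-up even sharpens the paper's ``$n\ge m$'' to the strict $n>m$ (hence $n-m\ge1$), which is indeed what the inequality $\beta^{-m}>|I|^{1+\delta}$ gives.
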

\begin{proof}
	Since $ |I|<n_0\beta^{-n_0} $, by Lemma \ref{l:BW} there exists a full cylinder $ I_{m,\beta}(\be_m) $ satisfying
	\[I_{m,\beta}(\be_m)\subset I\qaq |I|^{1+\delta}< |I_{m,\beta}(\be_m)|=\beta^{-m}<|I|.\]
	For such $m$, we have $ n\ge m $ whenever $ n\ge -(1+\delta)\log_\beta |I| $. By Proposition \ref{p:concatenation}, the concatenation of two full sequences $ \be_{n-m} \in\Lambda_{\beta}^{n-m} $ and $ \be_m $ is still full. Thus,
	\[\#\Lambda_{\beta}^n(I)\ge \#\Lambda_{\beta}^{n-m}\ge c_\beta\beta^{n-m}\ge c_\beta|I|^{1+\delta}\beta^n,\]
	where the constant $c_\beta>0$ depending on $\beta$ is given in Lemma \ref{l:Li}.
\end{proof}
\section{Optimal cover of parallelepipeds}
The proof of Theorem \ref{t:m} relies on finding efficient covering by balls of the $\limsup$ set $W(\mathcal P)$. With this in mind, we need to study the optimal cover of parallelepipeds, which is closely related to its Hausdorff content.

In what follows, for geometric
reasons it will be convenient to equip $\R^d$ with the maximal norm, and thus balls correspond to hypercubes. For any set $E\subset \R^d$, its $s$-dimensional {\em Hausdorff content} is given by
\[\mathcal H^s_\infty(E)=\inf\bigg\{\sum_{i=1}^{\infty}|B_i|^s:E\subset \bigcup_{i=1}^\infty B_i\text{ where $B_i$ are open balls}\bigg\}.\]
  In other words, the optimal cover of a Borel set can be characterized by its Hausdorff content, which is generally estimated by putting measures or mass distributions on it, following the mass distribution principle described below.
\begin{prop}[Mass distribution principle {\cite[Lemma 1.2.8]{BiPe17}}]\label{p:MDP}
	Let $ E $ be a subset of $ \R^d $. If $ E $ supports a strictly positive Borel measure $ \mu $ that satisfies
	\[\mu\big(B(\bx,r)\big)\le cr^s,\]
	for some constant $ 0<c<\infty $ and for every ball $B(\bx,r)$, then $ \mathcal H^s_\infty(E)\ge\mu(E)/c $.
\end{prop}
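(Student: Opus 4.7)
The plan is to execute the standard one-line argument for the mass distribution principle: bound $\mu(E)$ against the $s$-cost of an arbitrary ball cover via countable subadditivity and the hypothesis, then take the infimum over covers.

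First I would fix any countable cover $\{B_i\}_{i\ge 1}$ of $E$ by open balls, writing $B_i=B(\bx_i,r_i)$. Since $\mu$ is a Borel measure and open balls are Borel, monotonicity and countable subadditivity give
\[
\mu(E)\le \mu\bigg(\bigcup_{i\ge 1} B_i\bigg)\le \sum_{i\ge 1}\mu(B_i),
\]
and the hypothesis $\mu(B(\bx,r))\le cr^s$ applied to each ball yields $\mu(E)\le c\sum_{i\ge 1} r_i^s$. Under the maximum norm $\R^d$ has been equipped with, each ball $B(\bx,r)$ is a hypercube of sidelength $2r$, so its diameter satisfies $|B_i|=2r_i$. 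Substituting gives $\mu(E)\le c\,2^{-s}\sum_{i\ge 1}|B_i|^s$. Taking the infimum over all such covers produces $\mu(E)\le c\,2^{-s}\mathcal H^s_\infty(E)$, hence
\[
\mathcal H^s_\infty(E)\ge 2^s\mu(E)/c\ge \mu(E)/c,
\]
as desired.

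There is no substantive obstacle here: the whole proof reduces to countable subadditivity of $\mu$ combined with the pointwise growth bound on balls. The only minor bookkeeping is the radius-versus-diameter conversion under the maximum norm, which merely improves the constant by a factor $2^s$ and does not affect the conclusion; the constant $c$ in the hypothesis could equivalently be rescaled to absorb it. Measurability of the balls against $\mu$ is automatic because $\mu$ is Borel, and the hypothesis that $\mu$ is strictly positive plays no role in the inequality itself (it is only relevant when one wants a strictly positive lower bound on $\mathcal H^s_\infty(E)$ in applications).
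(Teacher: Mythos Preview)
Your argument is correct and is the standard one. Note, however, that the paper does not supply its own proof of this proposition: it simply states it with a citation to \cite[Lemma 1.2.8]{BiPe17}, so there is nothing to compare against beyond confirming that your reasoning matches the well-known argument from that reference.
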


Following Falconer \cite{Falconer1988}, when $E$ is taken as a hyperrectangle $R$, its Hausdorff content can be expressed as the so-called {\em singular value function}. For a hyperrectangle $ R\subset \R^d $ with sidelengths $ a_1\ge a_2\ge \cdots\ge a_d>0 $ and a parameter $s\in [0,d]$, the singular value function $\varphi^s$ is defined by
\begin{equation}\label{eq:sinR}
	\varphi^s(R)=a_1\cdots a_ma_{m+1}^{s-m},
\end{equation}
where $m=\lfloor s\rfloor$.
%

The next lemma allows us to estimate the Hausdorff content of a Borel set inside a hyperrectangle. Denote the $d$-dimensional Lebesgue measure by $\mathcal L^d$.
\begin{lem}\label{l:content bound of E}
	Let $ E\subset \R^d $ be a bounded Borel set. Assume that there exists a hyperrectangle $R$ with sidelengths $a_1\ge a_2\ge \cdots\ge a_d>0$ such that $E\subset R$ and $\lm^d(E)\ge c\lm^d(R)$ for some $c>0$, then for any $0<s\le d$,
	\[c2^{-d}\varphi^s(R)\le \hc^s(E)\le \varphi^s(R).\]
\end{lem}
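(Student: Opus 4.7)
The plan is to prove the two inequalities separately.

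For the upper bound $\hc^s(E)\le \varphi^s(R)$, the idea is that $\hc^s(E)\le \hc^s(R)$ since $E\subset R$, so it suffices to cover $R$ efficiently by balls (i.e., hypercubes in the max norm). Setting $m=\lfloor s\rfloor$, I would cover $R$ by hypercubes of side length $a_{m+1}$, noting that only the first $m$ directions require more than one cube (since $a_i\le a_{m+1}$ for $i>m$). The covering number is at most $\prod_{i=1}^m\lceil a_i/a_{m+1}\rceil$, each cube contributing $a_{m+1}^s$. Taking appropriate limits (or modifying constants) gives a cover summing to $\prod_{i=1}^m a_i \cdot a_{m+1}^{s-m}=\varphi^s(R)$, as desired.

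For the lower bound $c\,2^{-d}\varphi^s(R)\le\hc^s(E)$, the strategy is to invoke the mass distribution principle (Proposition~\ref{p:MDP}) with the normalized Lebesgue measure $\mu=\lm^d|_E/\lm^d(E)$ on $E$. For an arbitrary ball $B(\bx,r)$, one has
\[\mu\bigl(B(\bx,r)\bigr)\le \frac{\lm^d(B(\bx,r)\cap R)}{\lm^d(E)}\le \frac{1}{c\,a_1\cdots a_d}\prod_{i=1}^d\min(2r,a_i).\]
Letting $k=k(r)$ be the largest index with $a_k\ge 2r$, this simplifies to $\mu(B(\bx,r))\le (2r)^k/(c\,a_1\cdots a_k)$. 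I then split into cases according to whether $k\le m$ or $k>m$. In each case the range of admissible $r$ is controlled (namely $a_{k+1}/2<r\le a_k/2$), and using the monotonicity $a_1\ge\cdots\ge a_d$ one checks that $(2r)^k/(a_1\cdots a_k)\le 2^s r^s/\varphi^s(R)$. Combined with $\mu(E)=1$, the mass distribution principle gives $\hc^s(E)\ge c\,\varphi^s(R)/2^s\ge c\,2^{-d}\varphi^s(R)$.

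The main obstacle is the case analysis in the lower bound. The delicate step is showing that in the regime $k>m$ (so $r$ is small, and several of the $a_i$'s exceed $2r$), the product $a_{m+1}^{s-m}\le \prod_{i=m+1}^{k-1}a_i\cdot a_k^{s-k+1}$. I would verify this by viewing it as a function of $a_k$ while fixing the larger sides, noting that the exponent $s-k+1$ is non-positive for $k\ge m+2$, so the expression is monotone in $a_k$; the extreme case $a_k=a_{k-1}=\cdots=a_{m+1}$ reduces both sides to equality. This monotonicity/degeneracy argument, combined with a symmetric treatment when $k\le m$, yields the uniform bound needed to apply the mass distribution principle.
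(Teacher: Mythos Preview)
Your proposal is correct and follows essentially the same approach as the paper. Both arguments treat the upper bound as immediate from covering $R$ by cubes of side $a_{m+1}$, and for the lower bound both apply the mass distribution principle to the normalized Lebesgue measure $\lm^d|_E/\lm^d(E)$, splitting the estimate of $\mu(B(\bx,r))$ according to which of the $a_i$ exceed $r$; the paper phrases this as the two cases $r<a_d$ and $a_{i+1}\le r<a_i$ (with subcases $i>m$ and $i\le m$), which is exactly your index $k(r)$ organized slightly differently, and your algebra in the ``delicate step'' amounts to the same chain of inequalities the paper writes out directly.
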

\begin{proof}
	The second inequality simply follows from $E\subset R$ and \eqref{eq:sinR}. So, we only need to prove the first one. Let $\nu$ be the normalized Lebesgue measure supported on $E$, i.e.
	\[\nu=\frac{\lm^d|_E}{\lm^d(E)}.\]
	For any $0<s\le d$, let $m=\lfloor s\rfloor$ be the integer part of $s$. Now we estimate the $\nu$-measure of arbitrary ball $B(\bx,r)$ with $r>0$ and $\bx\in E$. The proof is split into two cases.

	\noindent Case 1: $0<r<a_d$. Then,
	\[\begin{split}
		\nu\big(B(\bx,r)\big)&=\frac{\lm^d\big(E\cap B(x,r)\big)}{\lm^d(E)}\le \frac{\lm^d\big(R\cap B(\bx,r)\big)}{c\lm^d(R)}\le\frac{(2r)^d}{ca_1\cdots a_d}\\
		&=\frac{2^dr^s\cdot r^{d-s}}{ca_1\cdots a_ma_{m+1}^{s-m}a_{m+1}^{m+1-s}a_{m+2}\cdots a_d}\le \frac{2^dr^s}{ca_1\cdots a_ma_{m+1}^{s-m}}.
	\end{split}\]

	\noindent Case 2: $a_{i+1}\le r<a_i$ for $1\le i\le d-1$. It follows that
	\[\begin{split}
		\nu\big(B(x,r)\big)&\le \frac{\lm^d\big(R\cap B(x,r)\big)}{c\lm^d(R)}\le\frac{(2r)^i\cdot a_{i+1}\cdots a_d}{ca_1\cdots a_d}=\frac{2^ir^i}{ca_1\cdots a_i}.
	\end{split}\]
	If $i>m=\lfloor s\rfloor$, then the right hand side can be estimated in a way similar to Case 1,
	\[\frac{2^ir^i}{ca_1\dots a_i}=\frac{2^ir^s\cdot r^{i-s}}{ca_1\dots a_ma_{m+1}^{s-m}a_{m+1}^{m+1-s}a_{m+2}\cdots a_i}\le \frac{2^ir^s}{ca_1\dots a_ma_{m+1}^{s-m}}.\]
	If $i\le m=\lfloor s\rfloor$, then $i-s\le 0$, and so
	\[\frac{2^ir^i}{ca_1\dots a_i}=\frac{2^ir^s\cdot r^{i-s}}{ca_1\dots a_i}\le\frac{2^ir^s\cdot a_{i+1}^{i-s}}{ca_1\dots a_i}=\frac{2^ir^s}{ca_1\dots a_ia_{i+1}^{s-i}}\le \frac{2^ir^s}{ca_1\dots a_ma_{m+1}^{s-m}},\]
	where the last inequality follows from the fact that $a_{i+1}\le \cdots\le a_{m+1}$.

	With the estimation given above, by mass distribution principle we have
	\[\hc^s(E)\ge c2^{-d}a_1\dots a_ma_{m+1}^{s-m}=c2^{-d}\varphi^s(R),\]
	as desired.
\end{proof}

By the above lemma, to obtain the optimal cover of a parallelepiped $P$, it suffices to find a suitable hyperrectangle containing it. Since the optimal cover of $P$ does not depend on its location, we assume that one of its vertex lies in the origin. With this assumption, $P$ is uniquely determined by $d$ column vectors, say $\alpha_1,\dots,\alpha_d$. Moreover, we have
\[P=\{x_1\alpha_1+\cdots+x_d\alpha_d:(x_1,\dots,x_d)\in[0,1]^d\}.\]
\begin{lem}\label{l:paral}
	Let $P$ be a parallelepiped given above. There exists a hyperrectangle $R$ such that
	\[P\subset R\qaq \lm^d(P)=2^{-d(d+1)}\lm^d(R).\]
\end{lem}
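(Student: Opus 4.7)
The plan is to construct the rearrangement $i_1,\dots,i_d$ by a greedy procedure, use the greedy choice to bound every coefficient of $\alpha_{i_j}$ in the orthonormal basis produced by Gram--Schmidt uniformly by $|\gamma_k|$, and then read $R$ off from these bounds.

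\emph{Step 1 (greedy reordering).} I would pick $i_1$ maximizing $|\alpha_j|$; having chosen $i_1,\dots,i_{k-1}$, set $V_{k-1}:=\mathrm{span}(\alpha_{i_1},\dots,\alpha_{i_{k-1}})$ and choose $i_k$ among the remaining indices so as to maximize the norm of the orthogonal projection of $\alpha_{i_k}$ onto $V_{k-1}^\perp$; call that projection $\gamma_k$. Then $\gamma_1,\dots,\gamma_d$ are exactly the Gram--Schmidt vectors of $\alpha_{i_1},\dots,\alpha_{i_d}$ taken in this particular order.

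\emph{Step 2 (key bound).} For every $j\ge k$ the vector $\alpha_{i_j}$ was a candidate at step $k$, hence $|\pi_{V_{k-1}^\perp}\alpha_{i_j}|\le|\gamma_k|$. Applying this with $j=k+1$ and using $V_k^\perp\subset V_{k-1}^\perp$ yields $|\gamma_{k+1}|\le|\gamma_k|$, which is \eqref{eq:gamma}. Writing $e_k:=\gamma_k/|\gamma_k|$ (an orthonormal basis with $V_{k-1}^\perp=\mathrm{span}(e_k,\dots,e_d)$) and $c_{kj}:=\alpha_{i_j}\cdot e_k$, one has $c_{kj}=0$ for $k>j$, $c_{kk}=|\gamma_k|$, and the uniform estimate
\[
|c_{kj}|\le|\pi_{V_{k-1}^\perp}\alpha_{i_j}|\le|\gamma_k|\qquad\text{for all }k\le j.
\]

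\emph{Step 3 (building $R$).} Parametrize $P$ by $(x_1,\dots,x_d)\in[0,1]^d$. The $e_k$-coordinate of $\sum_j x_j\alpha_{i_j}$ equals $\sum_{j\ge k}x_jc_{kj}$, which sweeps out an interval of length at most $\sum_{j\ge k}|c_{kj}|\le(d-k+1)|\gamma_k|$. Hence $P$ sits inside the hyperrectangle with axes $e_1,\dots,e_d$ and $k$-th sidelength $(d-k+1)|\gamma_k|$. Since $2^{2(d-k+1)}\ge d-k+1$, I would enlarge the $k$-th sidelength to exactly $2^{2(d-k+1)}|\gamma_k|$ and position the resulting hyperrectangle $R$ so as to still contain $P$. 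Then
\[
\lm^d(R)=\prod_{k=1}^d 2^{2(d-k+1)}|\gamma_k|=2^{d(d+1)}\prod_{k=1}^d|\gamma_k|=2^{d(d+1)}\lm^d(P),
\]
since $\lm^d(P)=|\det(\alpha_{i_1},\dots,\alpha_{i_d})|=|\gamma_1|\cdots|\gamma_d|$ via the QR decomposition.

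The genuine obstacle is Step~2: obtaining the uniform coefficient bound $|c_{kj}|\le|\gamma_k|$ for every $k\le j$ simultaneously in a single orthonormal basis $e_1,\dots,e_d$, which is what makes the greedy order the correct one, exactly as the remark's parallelogram example foretells. Once that inequality is in place, Steps~1 and~3 are linear-algebra bookkeeping, and the factor $2^{2(d-k+1)}$ carries enough slack so that the product across $k$ telescopes to the exact value $2^{d(d+1)}$ stated in the lemma.
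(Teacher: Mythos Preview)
Your proof is correct and follows the same skeleton as the paper's: the greedy reordering, Gram--Schmidt, a coefficient bound, containment in an axis-aligned box in the $\gamma$-basis, and the volume identity $\lm^d(P)=|\gamma_1|\cdots|\gamma_d|$ via the QR factorisation. The substantive difference lies in the key coefficient bound. The paper isolates it as a separate lemma (Lemma~\ref{l:>2}) and shows $|U_{kj}|\le 2$ by a reverse triangle-inequality manipulation, then uses the crude uniform sidelength $2^{d+1}|\gamma_k|$ for $R$. You instead observe directly that $|c_{kj}|=|\alpha_{i_j}\cdot e_k|\le|\pi_{V_{k-1}^\perp}\alpha_{i_j}|\le|\gamma_k|$ straight from the greedy choice, which is equivalent to $|U_{kj}|\le 1$---sharper and a one-liner. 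This buys you the graded sidelength $(d-k+1)|\gamma_k|$, which you then pad to $2^{2(d-k+1)}|\gamma_k|$ so that the product over $k$ hits $2^{d(d+1)}$ exactly. Both arguments arrive at the stated constant by overshooting, but your Step~2 is the cleaner route to the coefficient control that the remark after Theorem~\ref{t:m} flags as the crux.
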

\begin{proof}
	We will emply Gram-Schmidt process to $\alpha_1,\dots,\alpha_d$ in a proper way to obtain $d$ pairwise orthogonal vectors that yield the desired hyperrectangle.

	First, let $\gamma_1=\alpha_{i_1}$ with $\alpha_{i_1}=\max_{1\le l\le d} |\alpha_l|$. For $1<k\le d$, let $\gamma_k$ be defined inductively as
	\begin{equation}\label{eq:gammak}
		\gamma_k=\alpha_{i_k}-\sum_{j=1}^{k-1}\frac{(\alpha_{i_k},\gamma_j)}{(\gamma_j,\gamma_j)}\gamma_j,
	\end{equation}
	where $\alpha_{i_k}$ is chosen so that
	\begin{equation}\label{eq:ak}
		\bigg|\alpha_{i_k}-\sum_{j=1}^{k-1}\frac{(\alpha_{i_k},\gamma_j)}{(\gamma_j,\gamma_j)}\gamma_j\bigg|=\max_{l\ne i_1,\dots,i_{k-1}}\bigg|\alpha_l-\sum_{j=1}^{k-1}\frac{(\alpha_l,\gamma_j)}{(\gamma_j,\gamma_j)}\gamma_j\bigg|.
	\end{equation}
	This is the standard Gram-Schmidt process and so $\gamma_1,\dots,\gamma_d$ are pairwise orthogonal. Besides,
	\begin{equation}\label{eq:U}
		(\alpha_{i_1},\dots,\alpha_{i_d})=(\gamma_1,\dots,\gamma_d)\begin{pmatrix}
			1 &  -\dfrac{(\alpha_{i_2},\gamma_1)}{(\gamma_1,\gamma_1)} & \cdots & -\dfrac{(\alpha_{i_d},\gamma_1)}{(\gamma_1,\gamma_1)} \\
			0 & 1 & \cdots & -\dfrac{(\alpha_{i_d},\gamma_2)}{(\gamma_2,\gamma_2)} \\
			\vdots & \vdots & \ddots & \vdots \\
			0 & 0 & \cdots & 1
		\end{pmatrix}.
	\end{equation}
	Denote the rightmost upper triangular matrix by $U$. For any $\bx=x_{i_1}\alpha_{i_1}+\cdots+x_{i_d}\alpha_{i_d}\in P$ with $(x_{i_1},\dots,x_{i_d})\in [0,1]^d$, we have
	\[\bx=(\alpha_{i_1},\dots,\alpha_{i_d})\begin{pmatrix}
		x_{i_1} \\
		\vdots \\
		x_{i_d}
	\end{pmatrix}=(\gamma_1,\dots,\gamma_d)U\begin{pmatrix}
		x_{i_1} \\
		\vdots \\
		x_{i_d}
	\end{pmatrix}.\]

	The proof of Lemma \ref{l:paral} will be completed with the help of
	\begin{lem}\label{l:>2}
	The absolute value of each entry of $U$ is not greater than $2$.
	\end{lem}
	\begin{proof}
		For any $1< k\le d$, by the orthogonality of $\gamma_1,\dots,\gamma_{k-1}$,
		\[\begin{split}
			&|\gamma_{k}|^2=(\gamma_k,\gamma_k)=\bigg(\alpha_{i_k}-\sum_{j=1}^{k-1}\frac{(\alpha_{i_k},\gamma_j)}{(\gamma_j,\gamma_j)}\gamma_j,\alpha_{i_k}-\sum_{j=1}^{k-1}\frac{(\alpha_{i_k},\gamma_j)}{(\gamma_j,\gamma_j)}\gamma_j\bigg)\\
			=&\bigg(\alpha_{i_k}-\sum_{j=1}^{k-2}\frac{(\alpha_{i_k},\gamma_j)}{(\gamma_j,\gamma_j)}\gamma_j-\frac{(\alpha_{i_k},\gamma_{k-1})}{(\gamma_{k-1},\gamma_{k-1})}\gamma_{k-1},\alpha_{i_k}-\sum_{j=1}^{k-2}\frac{(\alpha_{i_k},\gamma_j)}{(\gamma_j,\gamma_j)}\gamma_j-\frac{(\alpha_{i_k},\gamma_{k-1})}{(\gamma_{k-1},\gamma_{k-1})}\gamma_{k-1}\bigg)\\
			=&\bigg(\alpha_{i_k}-\sum_{j=1}^{k-2}\frac{(\alpha_{i_k},\gamma_j)}{(\gamma_j,\gamma_j)}\gamma_j,\alpha_{i_k}-\sum_{j=1}^{k-2}\frac{(\alpha_{i_k},\gamma_j)}{(\gamma_j,\gamma_j)}\gamma_j\bigg)-\frac{(\alpha_{i_k},\gamma_{k-1})^2}{(\gamma_{k-1},\gamma_{k-1})}
			\le |\gamma_{k-1}|^2,
		\end{split}\]
		where the last inequality follows from the definition of $\gamma_{k-1}$ (see \eqref{eq:ak}). This gives
		\begin{equation}\label{eq:gam}
			|\gamma_1|\ge|\gamma_2|\ge\cdots\ge|\gamma_d|>0.
		\end{equation}
		By the above inequality and \eqref{eq:ak}, for any $1\le l\le k$, it follows that
		\[\begin{split}
			|\gamma_{l-1}|\ge |\gamma_l|&\ge\bigg|\alpha_{i_k}-\sum_{j=1}^{l-1}\frac{(\alpha_{i_k},\gamma_j)}{(\gamma_j,\gamma_j)}\gamma_j\bigg|\ge\bigg|\frac{(\alpha_{i_k},\gamma_{l-1})}{(\gamma_{l-1},\gamma_{l-1})}\gamma_{l-1}\bigg|-\bigg|\alpha_{i_k}-\sum_{j=1}^{l-2}\frac{(\alpha_{i_k},\gamma_j)}{(\gamma_j,\gamma_j)}\gamma_j\bigg|\\
			&\ge \bigg(\bigg|\frac{(\alpha_{i_k},\gamma_{l-1})}{(\gamma_{l-1},\gamma_{l-1})}\bigg|-1\bigg)|\gamma_{l-1}|,
		\end{split}\]
		which implies that
		\[\bigg|\frac{(\alpha_{i_k},\gamma_{l-1})}{(\gamma_{l-1},\gamma_{l-1})}\bigg|\le 2.\qedhere\]
	\end{proof}

	\textit{Now we proceed to prove Lemma} \ref{l:paral}.

	Let $(U_{i1},\dots,U_{id})$ be the $i$th row of $U$. Since $0\le x_{i_k}\le 1$, by Lemma \ref{l:>2} we have
	\[\left|(U_{i1},\dots,U_{id})\begin{pmatrix}
		x_{i_1} \\
		\vdots \\
		x_{i_d}
	\end{pmatrix}\right|=\bigg|\sum_{k=1}^{d}U_{ik}x_{i_k}\bigg|\le 2^d,\]
	and so
	\begin{equation}\label{eq:defR}
		\bx\in R:=\{x_1\gamma_1+\cdots+x_d\gamma_d:(x_1,\dots,x_d)\in [-2^d,2^d]^d\}.
	\end{equation}
	Therefore, $P\subset R$ which finishes the proof of the first point.

	On the other hand, by an elementary result of linear algebra,
	\begin{align}
		\lm^d(P)&=\text{the absolute value of the determinant $|(\alpha_{i_1},\dots,\alpha_{i_d})|$}\notag\\
		&=\text{the absolute value of the determinant $|(\gamma_1,\dots,\gamma_d)U|$}\notag\\
		&=|\gamma_1|\cdots|\gamma_d|=2^{-d(d+1)}\lm^d(R),\label{eq:measureP=gam}
	\end{align}
	where the third equality follows from the fact that $\gamma_1,\dots,\gamma_d$ are pairwise orthogonal and $U$ is upper triangular with all diagonal entries equal to 1, and the last equality follows from \eqref{eq:defR}.
\end{proof}

\section{Proof of Theorem~\ref{t:m}}
Throughout, we write $a\asymp b$ if $c^{-1}\le a/b\le c$, and $a\lesssim b$ if $a\le cb$ for some unspecified constant $c\ge 1$.
\subsection{Upper bound of $ \hdim W(\mathcal P) $}\label{s:ub}
Obtaining upper estimates for the Hausdorff dimension of a $\limsup$ set is usually straightforward, as it involves a natural covering argument.

For $ 1\le i\le d $, and any $ \be_n^i=(\epsilon_1^i,\dots,\epsilon_n^i)\in\Sigma_{\beta_i}^n $, we always take
\begin{equation}\label{eq:xi}
	z_i^*=\frac{\epsilon_1^i}{\beta_i}+\frac{\epsilon_2^i}{\beta_i^2}+\cdots+\frac{\epsilon_n^i}{\beta_i^n}
\end{equation}
to be the left endpoint of $ I_{n,\beta_i}(\be_n^i) $. Write $ \bz^*=(z_1^*,\dots, z_d^*) $. Then $ W(\mathcal P) $ is contained in the following set
\begin{equation}\label{eq:uplimsup}
	\bigcap_{N=1}^\infty\bigcup_{n=N}^\infty\bigcup_{\be_n^1\in \Sigma_{\beta_1}^n}\cdots\bigcup_{\be_n^d\in\Sigma_{\beta_d}^n}(f^nP_n+\bz^*)=:\bigcap_{N=1}^\infty\bigcup_{n=N}^\infty E_n.
\end{equation}
For any $n\ge 1$, let $f^n\alpha_1^{(n)},\dots,f^n\alpha_d^{(n)}$ be the vectors that determine $f^nP_n$. By Lemma \ref{l:paral} and Equation \eqref{eq:gammak}, there is a hyperrectangle $R_n$ with sidelengths $2^{d+1}|\gamma_1^{(n)}|\ge \cdots\ge 2^{d+1}|\gamma_d^{(n)}|>0$ such that $f^nP_n\subset R_n$.

Recall that $\ca_n=\{\beta_1^{-n},\dots,\beta_d^{-n},|\gamma_1^{(n)}|,\dots,|\gamma_d^{(n)}|\}$, and for any $\tau\in\ca_n$,
\[\ck_{n,1}(\tau):=\{1\le i\le d:\beta_i^{-n}\le \tau\}\qaq \ck_{n,2}(\tau)=\{1\le i\le d:|\gamma_i^{(n)}|\ge \tau\}.\]
Let $\tau\in\ca_n$. We now estimate the number of balls of diameter $\tau$ needed to cover the set $E_n$. We start by covering a fixed parallelepiped $P:=f^nP_n+\bz^*$. In what follows, one can regard $P$ as a hyperrectangle, since $P=f^nP_n+\bz^*\subset R_n+\bz^*$. It is easily verified that we can find a collection $\cb_n(P)$ of balls of diameter $\tau$ that covers $P$ with
\[\#\cb_n(P)\lesssim \prod_{i\in\ck_{n,2}(\tau)}\frac{\big|\gamma_i^{(n)}\big|}{\tau}.\]
Observe that the collection $\cb_n(P)$ will also cover other parallelepipeds contained in $E_n$ along the direction of the $i$th axis with $i\in\ck_{n,1}(\tau)$. Namely, the collection of balls $\cb_n(P)$ simutaneously covers
\[\asymp\prod_{i\in\ck_{n,1}(\tau)}\frac{\tau}{\beta_i^{-n}}\]
parallelepipeds. Since the number of parallelepipeds contained in $E_n$ is less than $\lesssim \beta_1^n\cdots\beta_d^{n}$, one needs at most
\begin{align}
	\lesssim&\bigg(\prod_{i\in\ck_{n,2}(\tau)}\frac{\big|\gamma_i^{(n)}\big|}{\tau}\bigg)\cdot (\beta_1^n\cdots\beta_d^n)\Big/\prod_{i\in\ck_{n,1}(\tau)}\frac{\tau}{\beta_i^{-n}}\notag\\
	=&\prod_{i\in\ck_{n,1}(\tau)}\tau^{-1}\prod_{i\notin\ck_{n,1}(\tau)}\beta_i^{n}\prod_{i\in\ck_{n,2}(\tau)}\frac{\big|\gamma_i^{(n)}\big|}{\tau}\label{eq:numballs}
\end{align}
balls of diameter $\tau$ to cover $E_n$.

Now suppose that $s>s^*=\limsup s_n$, where
\[s_n:=\min_{\tau\in\ca_n}\bigg\{\sum_{i\in\ck_{n,1}(\tau)}1+\sum_{i\notin\ck_{n,1}(\tau)}\frac{n\log\beta_i}{-\log\tau}+\sum_{i\in\ck_{n,2}(\tau)}\bigg(1-\frac{\log|\gamma_i^{(n)}|}{\log\tau}\bigg)\bigg\}.\]
Let $\varepsilon<s-s^*$. For any large $n$, we have $s>s_n+\varepsilon$. Let $\tau_0\in\ca_n$ be such that the minimum in the definition of $s_n$ is attained. In particular,  \eqref{eq:numballs} holds for $\tau_0$. The $s$-volume of the cover of $E_n$ is majorized by
\[\begin{split}
	\lesssim&\Bigg(\prod_{i\in\ck_{n,1}(\tau_0)}\tau_0^{-1}\prod_{i\notin\ck_{n,1}(\tau_0)}\beta_i^{n}\prod_{i\in\ck_{n,2}(\tau_0)}\frac{\big|\gamma_i^{(n)}\big|}{\tau_0}\Bigg)\cdot \tau_0^s\\
	=&\exp\bigg(\sum_{i\in\ck_{n,1}(\tau_0)}-\log\tau_0+\sum_{i\notin\ck_{n,1}(\tau_0)}n\log\beta_i+\sum_{i\in\ck_{n,2}(\tau_0)}(\log|\gamma_{i}^{(n)}|-\log\tau_0)+s\log\tau_0\bigg)\\
	=&\exp\bigg(-\log\tau_0\bigg(\sum_{i\in\ck_{n,1}(\tau_0)}1+\sum_{i\notin\ck_{n,1}(\tau_0)}\frac{n\log\beta_i}{-\log\tau_0}+\sum_{i\in\ck_{n,2}(\tau_0)}\bigg(1-\frac{\log|\gamma_{i}^{(n)}|}{\log\tau_0}\bigg)-s\bigg)\bigg)\\
	=&\exp\big(-\log\tau_0(s_n-s)\big)\le \exp(
	\varepsilon\log\tau_0).
\end{split}\]
Since the elements of $\ca_n$ decay exponentially, the last equation is less than $e^{-n\delta\varepsilon}$ for some $\delta>0$ independent of $n$ and $\varepsilon$. It follows from the definition of $s$-dimensional Hausdorff measure that for any $s$, $\delta>0$ and $\varepsilon$ given above,
\[\mathcal H^s\big(W(\mathcal P)\big)\le\liminf_{N\to\infty}\sum_{n=N}^{\infty}e^{-n\delta\varepsilon}=0.\]
Therefore, $\hdim W(\mathcal P)\le s$. Since this is true for all $s>s^*$, we have
\[\hdim W(\mathcal P)\le s^*=\limsup_{n\to\infty}s_n.\]
\subsection{Lower bound of $ \hdim W(\mathcal P) $}\label{s:lipwd}

The proof crucially relies on the following lemma.
\begin{lem}[{\cite[Corollary 2.6]{He24}}]\label{l:LIP}
	Let $ \{F_n\}_{n\ge 1} $ be a sequence of open sets in $[0,1]^d$ and $ F=\limsup F_n $. Let $ s>0 $. If for any $ 0<t<s $, there exists a constant $ c_t $ such that
	\begin{equation}\label{c:LIP condition}
		\limsup_{n\to\infty} \mathcal H^t_\infty(F_n\cap D)\ge c_t|D|^d
	\end{equation}
	holds for all hypercubes $ D\subset [0,1]^d $, then $ F \in \mathscr G^{s}([0,1]^d) $. In particular, $\hdim F\ge s$.
\end{lem}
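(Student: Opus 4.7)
\textbf{Proof proposal for Lemma \ref{l:LIP}.} The plan is to reduce the statement to Falconer's classical characterization of the large intersection classes and then verify that characterization via a multiscale Cantor-type construction inside each hypercube. Recall that a $G_\delta$ subset $F$ of $[0,1]^d$ lies in $\mathscr G^s([0,1]^d)$ if and only if for every $0<t<s$ there exists a constant $\kappa_t>0$ with
\[\mathcal H^t_\infty(F\cap B)\;\ge\;\kappa_t\,\mathcal H^t_\infty(B)\]
for every open ball $B\subset[0,1]^d$; equivalently (since $\mathcal H^t_\infty(B)\asymp|B|^t$), one needs $\mathcal H^t_\infty(F\cap B)\gtrsim|B|^t$. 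That $F$ is $G_\delta$ is immediate from $F=\bigcap_{N}\bigcup_{n\ge N}F_n$ together with the openness of each $F_n$. The dimensional estimate $\hdim F\ge s$ is a formal consequence of $F\in\mathscr G^s([0,1]^d)$, so the substance lies in verifying the local content inequality above.

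The difficulty is that the hypothesis supplies only $\mathcal H^t_\infty(F_n\cap D)\gtrsim|D|^d$ along some subsequence in $n$, which at small scales is strictly weaker than $|D|^t$ (since $t<d$ forces $|D|^d\le|D|^t$). The plan is to amplify this Lebesgue-type bound into the desired $|B|^t$ bound by iterating the hypothesis at smaller and smaller scales—a bootstrap of the same flavour as the Beresnevich--Velani mass transference principle. Concretely, fix $0<t<u<s$ and a hypercube $D$. I would build a nested sequence $\mathcal C_1\supset\mathcal C_2\supset\cdots$ of finite families of pairwise disjoint open cubes, where the cubes at level $k$ have common side $\ell_k$, such that: (i) every $D'\in\mathcal C_k$ is contained in $F_{n_k}\cap\mathrm{parent}(D')$ for some strictly increasing sequence $n_1<n_2<\cdots$; and (ii) inside each parent cube $D_k\in\mathcal C_k$, the members of $\mathcal C_{k+1}$ are produced by applying the hypothesis at parameter $u$ to obtain $\mathcal H^u_\infty(F_{n_{k+1}}\cap D_k)\gtrsim|D_k|^d$ with $n_{k+1}>n_k$, and then extracting disjoint subcubes of side $\ell_{k+1}\ll\ell_k$ inside $F_{n_{k+1}}\cap D_k$ via a Vitali-type packing argument that realizes a controlled portion of this content. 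The $\limsup$ nature of the hypothesis is essential here, as it permits $n_{k+1}$ to be taken arbitrarily large; this guarantees that the limiting Cantor set $K=\bigcap_{k}\bigcup_{D'\in\mathcal C_k}D'$ lies in $F\cap D$.

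A natural probability measure on $K$, assigning equal mass to the offspring of each parent cube, is then estimated via the mass distribution principle (Proposition~\ref{p:MDP}). By choosing the ratios $\ell_{k+1}/\ell_k$ to decay geometrically at a rate calibrated to the gap $u-t$, the Frostman-type bound $\mu(B(\bx,r))\lesssim r^t$ holds uniformly across scales $r$ and positions $\bx$, yielding $\mathcal H^t_\infty(K)\gtrsim|D|^t$ and hence $\mathcal H^t_\infty(F\cap D)\gtrsim|D|^t$. Since $D$ and $t<s$ are arbitrary, the Falconer characterization delivers $F\in\mathscr G^s([0,1]^d)$, and the dimensional bound follows. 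The principal obstacle is precisely this bootstrap from ``content $\gtrsim|D|^d$ at every scale'' to ``content $\gtrsim|D|^t$ in the limit'': one must ensure simultaneously that the level-$k$ packing count overcompensates for the loss incurred by passing from scale $\ell_{k-1}$ to $\ell_k$, and that the side length decay is slow enough that the induced measure avoids concentrating on any intermediate ball. Balancing these two constraints by tuning the decay rate to $u-t$ and then letting $u\uparrow s$ covers the full range $0<t<s$.
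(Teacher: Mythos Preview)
The paper does not contain a proof of this lemma: it is quoted as Corollary~2.6 of \cite{He24} and invoked as a black box in \S\ref{s:lipwd}. There is therefore nothing in the present paper to compare your sketch against.

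That said, your outline follows the standard architecture for results of this type (cf.\ Persson--Reeve \cite{PerRe2015}, and presumably \cite{He24} itself): reduce to Falconer's characterisation of $\mathscr G^s$, iterate the hypothesis at an intermediate exponent $u\in(t,s)$ to build a Cantor subset of $F\cap D$, and conclude via the mass distribution principle. One point that needs care is the extraction step. From $\mathcal H^u_\infty(F_{n}\cap D_k)\ge c_u|D_k|^d$ together with the openness of $F_n$, one obtains a maximal dyadic decomposition $F_n\cap D_k=\bigcup_i Q_i$ into disjoint cubes lying inside $F_n\cap D_k$; since the $Q_i$ form a cover, $\sum_i|Q_i|^u\ge \mathcal H^u_\infty(F_n\cap D_k)\ge c_u|D_k|^d$. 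However, these $Q_i$ need not share a common sidelength, so your assumption that all level-$k$ cubes have the same side $\ell_k$ is an oversimplification: the Cantor construction and the accompanying Frostman estimate must accommodate variable-size cubes at each level. With that adjustment the plan is sound.
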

\begin{rem}
	A weaker version by Persson and Reeve~\cite[Lemma 2.1]{PerRe2015} also applies to the current proof, but is not adopted here because it results in a more complex proof.
\end{rem}
Let
\[E_n=\bigcup_{\be_n^1\in \Sigma_{\beta_1}^n}\cdots\bigcup_{\be_n^d\in\Sigma_{\beta_d}^n}\big(I_{n,\beta_1}(\be_n^1)\times\cdots\times I_{n,\beta_d}(\be_n^d)\big)\cap(f^nP_n+\bz^*),\]
where $ \bz^*=(z_1^*,\dots,z_d^*) $ is defined as in \eqref{eq:xi}.

\begin{lem}\label{l:LIP of E}
	For any $ 0< t<s^*=\limsup s_n $,
	\[\limsup_{n\to\infty} \mathcal H^t_\infty(E_n\cap D)\apprge |D|^d\]
	holds for all hypercubes $ D\subset [0,1]^d $, where the unspecified constant  depends on $d$ only. Therefore, $W(\mathcal P)\in \mathscr G^{s^*}([0,1]^d)$, and in particular,
	\[\hdim W(\mathcal P)\ge s^*.\]
\end{lem}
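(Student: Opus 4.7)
The plan is to verify the hypothesis of Lemma \ref{l:LIP} with $s = s^*$ for the sequence $\{E_n\}$. Fix $0 < t < s^*$ and a hypercube $D \subset [0,1]^d$. Since $s^* = \limsup s_n$, infinitely many $n$ satisfy $s_n > t$; I aim to show $\mathcal H^t_\infty(E_n \cap D) \gtrsim |D|^d$ for each such $n$, which will give the desired $\limsup$ lower bound.

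The first step is to exhibit many translates of $f^n P_n$ inside $E_n \cap D$. Applying Lemma \ref{l:inside I} in each coordinate direction, the $i$th projection of $D$ contains $\gtrsim |D|^{1+\delta}\beta_i^n$ full cylinders of level $n$ (for $\delta > 0$ small and $n$ large). Taking products yields $N \gtrsim |D|^{d(1+\delta)}\prod_i \beta_i^n$ pairwise disjoint full cylinder products $I_{n,\beta_1}(\be_n^1)\times\cdots\times I_{n,\beta_d}(\be_n^d) \subset D$, each containing a translated copy $P^{(j)} := f^n P_n + \bz^*_j \subset E_n \cap D$. Setting $F := \bigcup_{j=1}^N P^{(j)}$ and $\mu := \lm^d|_F$, formula \eqref{eq:measureP=gam} gives $\mu(F) = N \cdot |\gamma_1^{(n)}|\cdots|\gamma_d^{(n)}|$.

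The core estimate is the ball bound
\[\mu\bigl(B(\bx, r)\bigr) \lesssim \prod_{i=1}^d \max(1, r\beta_i^n)\,\min\bigl(|\gamma_i^{(n)}|, r\bigr),\]
obtained by combining two facts: Lemma \ref{l:paral} embeds each $P^{(j)}$ in a hyperrectangle with sidelengths $\asymp |\gamma_i^{(n)}|$, so $\lm^d(P^{(j)} \cap B(\bx, r)) \lesssim \prod_i \min(|\gamma_i^{(n)}|, r)$; and the enclosing cylinder products have sides $\beta_i^{-n}$, so at most $\lesssim \prod_i \max(1, r\beta_i^n)$ of them meet $B(\bx, r)$. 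At any breakpoint $\tau \in \ca_n$, the right-hand side simplifies (by exactly the manipulation used in Section \ref{s:ub}) to a constant multiple of $\tau^{s_n(\tau)} \cdot \prod_i \beta_i^n |\gamma_i^{(n)}|$, where $s_n(\tau)$ denotes the bracketed expression whose minimum over $\tau$ defines $s_n$. Since $s_n(\tau) \ge s_n > t$ and $\tau \le 1$, this is $\lesssim \tau^t \cdot \prod_i \beta_i^n |\gamma_i^{(n)}|$, and the inequality extends to all $r > 0$ by monotonicity, since between consecutive elements of $\ca_n$ the right-hand side is a monomial in $r$.

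Proposition \ref{p:MDP} then yields $\mathcal H^t_\infty(E_n \cap D) \ge \mathcal H^t_\infty(F) \gtrsim \mu(F)\big/\prod_i \beta_i^n |\gamma_i^{(n)}| \asymp |D|^{d(1+\delta)}$. The main obstacle I foresee is absorbing the parasitic factor $|D|^{d\delta}$ to recover the clean exponent $|D|^d$ with a constant depending only on $d$: because the $\delta$ in Lemma \ref{l:inside I} may be chosen arbitrarily small (at the cost of requiring $n \ge n_0(\delta, \beta_i)$, where the constant $c_\beta$ from Lemma \ref{l:Li} is independent of $\delta$), by picking $\delta$ depending on $|D|$ so that $|D|^{d\delta}$ stays bounded below by a constant, and then taking $n$ correspondingly large within the $\limsup$, the desired $\mathcal H^t_\infty(E_n \cap D) \gtrsim |D|^d$ follows. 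Lemma \ref{l:LIP} then delivers $W(\mathcal P) \in \mathscr G^{s^*}([0,1]^d)$ and in particular $\hdim W(\mathcal P) \ge s^*$.
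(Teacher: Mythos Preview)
Your overall strategy---build a measure on many translates of $f^nP_n$ inside $D$, bound it on balls via the product expression $\prod_i\max(1,r\beta_i^n)\min(|\gamma_i^{(n)}|,r)$, and apply the mass distribution principle---is exactly the paper's approach, and your compact ball bound is a clean repackaging of the paper's four-case analysis. The algebra identifying the product with $\tau^{s_n(\tau)}\prod_i\beta_i^n|\gamma_i^{(n)}|$ at breakpoints is correct, and the monotonicity argument between breakpoints is valid.

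The genuine gap is your disposal of the parasitic factor $|D|^{d\delta}$. Choosing $\delta$ to depend on $|D|$ does not work: to keep $|D|^{d\delta}$ bounded below you need $\delta\asymp 1/(-\log|D|)$, but the threshold $n_0$ in Lemma~\ref{l:inside I} must satisfy $(\beta_i n_0)^{1+\delta}<\beta_i^{n_0\delta}$, forcing $n_0$ to grow like $(-\log|D|)\log(-\log|D|)$. The precondition $|D|<n_0\beta_i^{-n_0}$ of that lemma then demands $-\log|D|\gtrsim n_0\log\beta_i$, which is violated once $|D|$ is small. So the very hypercubes for which you need $\delta$ smallest are precisely those excluded by Lemma~\ref{l:inside I}.

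The paper's fix is to keep $\delta$ \emph{fixed}, depending only on $\varepsilon=s^*-t$ (take $\delta=\varepsilon/d$, or any $\delta<(s^*-t)/d$), and to exploit the slack $s_n-t$ in the exponent rather than in $\delta$. Since $\limsup s_n=s^*$, infinitely many $n$ satisfy $s_n>t+d\delta$; for those $n$ your ball bound gives $\mu\bigl(B(\bx,r)\bigr)\lesssim r^{s_n}\prod_i\beta_i^n|\gamma_i^{(n)}|$, and because $r\lesssim|D|$ in every case where $r$ lies in the range of $\ca_n$ (this uses the condition $\beta_i^{-n}\ll|D|$, i.e.\ \eqref{eq:condition n}), one has $r^{s_n}\le r^t\cdot r^{d\delta}\le r^t|D|^{d\delta}$. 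Dividing by $\mu(F)\asymp|D|^{d(1+\delta)}\prod_i\beta_i^n|\gamma_i^{(n)}|$ then yields the clean bound $\lesssim r^t/|D|^d$ needed for Lemma~\ref{l:LIP}. With this modification your argument goes through.
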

\begin{proof}
	Fix $ 0<t<s^* $. Write $\varepsilon=s^*-t$.   By definition, there exist infinitely many $n$ such that
	\begin{equation}\label{eq:sn}
		s_n>t.
	\end{equation}
	In view of Lemma \ref{l:inside I}, let $ D\subset [0,1]^d $ be a hypercube with $ |D|\le n_0\beta_d^{-n_0} $, where $ n_0 $ is an integer such that $ (\beta_i n_0)^{1+\varepsilon/d}<\beta_i^{n_0\varepsilon/d} $ for $1\le i\le d$. Let $ n $ be an integer such that \eqref{eq:sn} holds and for any $1\le i\le d$,
	\begin{equation}\label{eq:condition n}
		n\ge -(1+\varepsilon/d)\log_{\beta_i}|D|\qaq \beta_i^{-n}/2\le |D|^{d+\varepsilon}.
	\end{equation}
	Obviously, there are still infinitely many $n$ that satisfy these conditions. Write $ D=I_1\times \cdots\times I_d $ with $ |I_1|=\cdots=|I_d| $. The first inequality in \eqref{eq:condition n} ensures that Lemma \ref{l:inside I} is applicable to bound $ \#\Lambda_{\beta_i}^n(I_i) $ from below for $ 1\le i\le d $.

	Recall from Lemma \ref{l:paral} and Equation $\eqref{eq:gammak}$ that for any $n\ge 1$, $f^nP_n+z^*$ is contained in some hyperrectangle with sidelengths $2^{d+1}|\gamma_1^{(n)}|\ge \cdots\ge 2^{d+1}|\gamma_d^{(n)}|>0$. For any $n\in\N$ satisfying \eqref{eq:sn} and \eqref{eq:condition n}, define a probability measure $ \mu_n  $ supported on $ E_n\cap D $ by
	\begin{equation}\label{eq:mea mud}
		\mu_n =\sum_{\be_n^1\in \Lambda_{\beta_1}^{n}(I_1)}\cdots\sum_{\be_n^d\in \Lambda_{\beta_d}^n(I_d)}\frac{\nu_{\bz^*}}{\#\Lambda_{\beta_1}^n(I_1)\cdots \#\Lambda_{\beta_d}^n(I_d)},
	\end{equation}
	where $ \nu_{\bz^*} $ is defined by
	\begin{equation}\label{eq:nustar}
		\nu_{\bz^*}:=\frac{\lm^d|_{f^nP_n+\bz^*}}{\lm^d(f^nP_n+\bz^*)}=\frac{\lm^d|_{f^nP_n+\bz^*}}{|\gamma_1^{(n)}|\cdots|\gamma_d^{(n)}|}.
	\end{equation}
	The equality $\lm^d(f^nP_n+\bz^*)=|\gamma_1^{(n)}|\cdots|\gamma_d^{(n)}|$ can be deduced from \eqref{eq:measureP=gam}.

	Let $ \bx\in E_n\cap D $ and $ r>0 $. Suppose that $\bx\in f^nP_n+\by^*\subset E_n\cap D$. Now, we estimate $ \mu_n (B(\bx,r)) $, and the proof is divided into four distinct cases.

	\noindent \textbf{Case 1}: $r\ge|D|$. Clearly, since $t<s\le d$,
	\[\mu_n \big(B(\bx,r)\big)\le 1=\frac{|D|^d}{|D|^d}\le \frac{r^d}{|D|^d}\le \frac{r^t}{|D|^d}.\]

	\noindent \textbf{Case 2}: $ r\le |\gamma_d^{(n)}| $.  Note that in the definition of $\mu_n $, all the cylinders under consideration are full. We see that the ball $ B(\bx,r) $ intersects at most $ 2^d $ parallelepipeds with the form $ f^nP_n+\bz^* $. For any such parallelepiped, by the definition of $\nu_{\bz^*}$ (see \eqref{eq:nustar}) and Lemma \ref{l:inside I} we have
	\begin{align}
		\frac{\mathcal \nu_{\bz^*}\big(B(\bx,r)\big)}{\#\Lambda_{\beta_1}^n(I_1)\cdots \#\Lambda_{\beta_d}^n(I_d)}\notag&\lesssim \frac{1}{\beta_1^n\cdots \beta_d^n|D|^{d+\varepsilon}}\cdot \frac{r^d}{|\gamma_1^{(n)}|\cdots|\gamma_d^{(n)}|}\notag\\
		&= \frac{r^{d-\sum_{i=1}^d(\log\beta_i^n+\log|\gamma_i^{(n)}|)/\log r}}{|D|^{d+\varepsilon}}.\label{eq:case1}
	\end{align}
	Since $f^nP_n+\bz^*$ is contained in some $I_{n,\beta_1}(\be_n^1)\times\cdots\times I_{n,\beta_d}(\be_n^d)$, by a volume argument we have $\sum_{i=1}^d(\log\beta_i^n+\log|\gamma_i^{(n)}|)< 0$. This combined with  $r\le|\gamma_d^{(n)}|<1$ gives
	\begin{align}
		d-\sum_{i=1}^d(\log\beta_i^n+\log|\gamma_i^{(n)}|)/\log r&\ge d-\sum_{i=1}^d(\log\beta_i^n+\log|\gamma_i^{(n)}|)/\log |\gamma_d^{(n)}|\notag\\
		&=\sum_{i=1}^d\frac{n\log\beta_i}{-\log |\gamma_d^{(n)}|}+\sum_{i=1}^d1-\frac{\log|\gamma_i^{(n)}|}{\log|\gamma_d^{(n)}|}.\label{eq:quan1}
	\end{align}
	One can see that the right hand side of \eqref{eq:quan1} is just the one in \eqref{eq:ck} defined by choosing $\tau=|\gamma_d^{(n)}|$, since
	\[\ck_{n,1}(|\gamma_d^{(n)}|)=\emptyset\qaq\ck_{n,2}(|\gamma_d^{(n)}|)=\{1,\dots,d\}.\]
	This means that the quantity in \eqref{eq:quan1} is greater than or equal to $s_n$, and so by \eqref{eq:case1} one has
	\[\mu_n \big(B(\bx,r)\big)\lesssim 2^d\cdot\frac{r^{s_n}}{|D|^{d+\varepsilon}}\lesssim \frac{r^{s_n-\varepsilon}}{|D|^d}\le\frac{r^t}{|D|^d}.\]

	\noindent \textbf{Case 3}: $ \beta_1^{-n}< r\le|D| $. In this case, the ball $B(\bx,r)$ is sufficently large so that for any hyperrectangle $R:=I_{n,\beta_1}(\be_n^1)\times\cdots\times I_{n,\beta_d}(\be_n^d)$,
	\[B(\bx,r)\cap R\ne\emptyset\quad\Longrightarrow\quad R\subset B(\bx,3r).\]
	A simple calculation shows that $B(\bx,r)$ intersects at most $\lesssim r^d\beta_1^n\cdots\beta_d^n$ hyperrectangles with the form $I_{n,\beta_1}(\be_n^1)\times\cdots\times I_{n,\beta_d}(\be_n^d)$. By the definition of $\mu_n $, one has
	\[\begin{split}
		\mu_n \big(B(\bx,r)\big)&\lesssim\frac{1}{\#\Lambda_{\beta_1}^n(I_1)\cdots \#\Lambda_{\beta_d}^n(I_d)}\cdot r^d\beta_1^n\cdots\beta_d^n\\
		&\lesssim\frac{r^d}{|D|^{d+\varepsilon}}\le\frac{r^{d-\varepsilon}}{|D|^d}\le\frac{r^t}{|D|^d}.
	\end{split}\]

	\noindent\textbf{Case 4:} Arrange the elements in $\ca_n$ in non-descending order. Suppose that $\tau_{k+1}\le r<\tau_k$ with $\tau_k$ and $\tau_{k+1}$ are two consecutive terms in $\ca_n$. Let
	\[\ck_{n,1}(\tau_{k+1}):=\{1\le i\le d:\beta_i^{-n}\le \tau_{k+1}\}\qaq \ck_{n,2}(\tau_k)=\{1\le i\le d:|\gamma_i^{(n)}|\ge \tau_k\}.\]
	be defined in the same way as in \eqref{eq:ck}. It is easy to see that $B(\bx,r)$ can intersects at most
	\[\lesssim\prod_{i\in\ck_{n,1}(\tau_{k+1})}r\beta_i^n\]
	parallelepipeds with positive $\mu_n$-measure. Moreover, the $\mu_n $-measure of the intersection of each parallelepiped with $B(\bx,r)$ is mojorized by
	\[\begin{split}
		&\lesssim\frac{1}{|D|^{d+\varepsilon}\beta_1^n\cdots\beta_d^n}\cdot\frac{1}{|\gamma_1^{(n)}|\cdots|\gamma_d^{(n)}|}\cdot \prod_{i\in\ck_{n,2}(\tau_k)}r\cdot\prod_{i\notin\ck_{n,2}(\tau_k)}|\gamma_i^{(n)}|\\
		&=\frac{1}{|D|^{d+\varepsilon}\beta_1^n\cdots\beta_d^n}\cdot \prod_{i\in\ck_{n,2}(\tau_k)}\frac{r}{|\gamma_i^{(n)}|}.
	\end{split}\]
	Therefore,
	\[\begin{split}
		\mu_n \big(B(\bx,r)\big)&\lesssim \bigg(\prod_{i\in\ck_{n,1}(\tau_{k+1})}r\beta_i^n\bigg)\cdot \bigg(\frac{1}{|D|^{d+\varepsilon}\beta_1^n\cdots\beta_d^n}\cdot \prod_{i\in\ck_{n,2}(\tau_k)}\frac{r}{|\gamma_i^{(n)}|}\bigg)\\
		&=\frac{1}{|D|^{d+\varepsilon}}\cdot\prod_{i\in\ck_{n,1}(\tau_{k+1})}r\cdot \prod_{i\notin\ck_{n,1}(\tau_{k+1})}\beta_i^{-n}\cdot \prod_{i\in\ck_{n,2}(\tau_k)}\frac{r}{|\gamma_i^{(n)}|}\\
		&=\frac{r^{s(r)}}{|D|^{d+\varepsilon}},
	\end{split}\]
	where
	\[s(r)=\sum_{i\in\ck_{n,1}(\tau_{k+1})}1+\sum_{i\notin\ck_{n,1}(\tau_{k+1})}\frac{-n\log\beta_i}{\log r}+\sum_{i\in\ck_{n,2}(\tau_k)}\bigg(1-\frac{\log|\gamma_i^{(n)}|}{\log r}\bigg).\]
	Clearly, as a function of $r$, $s(r)$ is monotonic on the interval $[\tau_{k+1},\tau_k]$. So the minimal value attains when $r=\tau_{k+1}$ or $\tau_k$. First, suppose that the minimum is attained at $r=\tau_k$. If $\ck_{n,1}(\tau_k)=\ck_{n,1}(\tau_{k+1})$, then there is nothing to be proved. So we may assume that $\ck_{n,1}(\tau_k)\ne\ck_{n,1}(\tau_{k+1})$. Since $\ck_{n,1}(\tau_{k+1})\subsetneq\ck_{n,1}(\tau_k)$, one can see that $\tau_k=\beta_j^{-n}$ for some $j$ and
	\[\ck_{n,1}(\tau_k)=\ck_{n,1}(\tau_{k+1})\cup\{j\}.\]
	It follows that
	\[\sum_{i\in\ck_{n,1}(\tau_{k+1})}1+\sum_{i\notin\ck_{n,1}(\tau_{k+1})}\frac{-n\log\beta_i}{\log \tau_k}=\sum_{i\in\ck_{n,1}(\tau_{k})}1+\sum_{i\notin\ck_{n,1}(\tau_{k})}\frac{-n\log\beta_i}{\log \tau_k},\]
	which implies that
	\[s(r)\ge s_n.\]
	By a similar argument one still have $s(r)\ge s_n$ if the minimum is attained at $r=\tau_{k+1}$. Therefore,
	\[\mu_n \big(B(\bx,r)\big)\lesssim\frac{r^{s_n}}{|D|^{d+\varepsilon}}\le \frac{r^{s_n-\varepsilon}}{|D|^d}\le\frac{r^t}{|D|^d}.\]

	Summarizing the estimates of the $\mu_n $-measures of arbitrarily balls presented in Cases 1--4, we get
	\[\mu_n \big(B(\bx,r)\big)\lesssim\frac{r^t}{|D|^d}\quad\text{for all }r>0,\]
	where the unspecified constant does not depend on $D$. Finally, by mass distribution principle,
	\[\mathcal H^t_{\infty}(E_n\cap D)\apprge|D|^d.\]
	This is true for infinitely many $n$, and the proof is completed.
\end{proof}

\subsection*{Acknowledgements}
This work was supported by NSFC (Nos. 1240010704). The author would like to thank Prof. Lingmin Liao for bringing this problem to his attention. Also the author is grateful to the anonymous referees for
their patience and efforts to improve the quality of the manuscript.

\end{document}